\newtheorem{theorem}{Theorem}[section]
\newtheorem{lemma}[theorem]{Lemma}
\newtheorem{conjecture}[theorem]{Conjecture}
\DeclareMathOperator{\ex}{ex}
\numberwithin{equation1}{section}
\begin{document}
\title{Making $K_{r+1}$-Free Graphs $r$-partite}

\author{J\'ozsef Balogh, Felix Christian Clemen, Mikhail Lavrov, Bernard Lidick\'{y}, Florian Pfender}

\author{%
  J\'ozsef Balogh \footnote{Department of Mathematics, University of Illinois at Urbana-Champaign, Urbana, Illinois 61801, USA, and Moscow Institute of Physics and Technology, Russian Federation. E-mail: \texttt{jobal@illinois.edu}. Research is partially supported by NSF Grant DMS-1764123, Arnold O. Beckman Research
Award (UIUC Campus Research Board RB 18132) and the Langan Scholar Fund (UIUC).}%
\and Felix Christian Clemen \footnote {Department of Mathematics, University of Illinois at Urbana-Champaign, Urbana, Illinois 61801, USA, E-mail: \texttt{fclemen2@illinois.edu}.}%
  \and Mikhail Lavrov \footnote {Department of Mathematics, University of Illinois at Urbana-Champaign, Urbana, Illinois 61801, USA, E-mail: \texttt{mlavrov@illinois.edu}}
  \and Bernard Lidick\'{y} \footnote {Iowa State University, Department of Mathematics, Iowa State University, Ames, IA.,E-mail: \texttt{lidicky@iastate.edu}. Research of this author is partially supported by NSF grant
DMS-1855653.}
\and Florian Pfender \footnote{Department of Mathematical and Statistical Sciences, University of Colorado Denver, E-mail: \texttt{Florian.Pfender@ucdenver.edu}. Research of this author is partially supported by NSF grant DMS-1855622.}%
  }

\date{}

\maketitle
\abstract{
The Erd\H{o}s--Simonovits stability theorem states that for all $\varepsilon>0$ there exists $\alpha>0$ such that if $G$ is a $K_{r+1}$-free graph on $n$ vertices with $e(G) > \ex(n,K_{r+1}) - \alpha n^2$, then one can remove $\varepsilon n^2$ edges from $G$ to obtain an $r$-partite graph. F{\"u}redi gave a short proof that one can choose $\alpha=\varepsilon$. We give a bound for the relationship of $\alpha$ and $\varepsilon$ which is asymptotically sharp as $\varepsilon \to 0$.
}

\section{Introduction}
Erd\H{o}s asked how many edges need to be removed in a triangle-free graph on $n$ vertices in order to make it bipartite. He conjectured that the balanced blow-up of $C_5$ with class sizes $n/5$ is the worst case, and hence $1/25 n^2$ edges would always be sufficient. Together with Faudree, Pach and Spencer \cite{Howtotriangle}, he proved that one can remove at most $1/18n^2$ edges to make a triangle-free graph bipartite. \\ 
Further, Erd\H{o}s, Gy\H{o}ri and Simonovits \cite{Howmany} proved that for graphs with at least $n^2/5$ edges, an unbalanced $C_5$ blow-up is the worst case. For $r\in \mathbb{N}$, denote $D_r(G)$ the minimum number of edges which need to be removed to make $G$ $r$-partite.

\begin{theorem} [Erd\H{o}s, Gy\H{o}ri and Simonovits \cite{Howmany}]
\label{ErdosC_5blowup}
Let $G$ be a $K_3$-free graph on $n$ vertices with at least $n^2/5$ edges. There exists an unbalanced $C_5$ blow-up of $H$ with $e(H)\geq e(G)$ such that   
\begin{align}
D_2(G)\leq D_2(H).
\end{align} 
\end{theorem}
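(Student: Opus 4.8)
The plan is to first reduce to the case that $G$ is edge-maximal among triangle-free graphs on $n$ vertices: both $e(\cdot)$ and $D_2(\cdot)$ are monotone under adding edges, so inserting into $G$ any edge that creates no triangle only increases $e(G)$ and cannot decrease $D_2(G)$; thus it suffices to prove the statement for such $G$. For edge-maximal $G$ we gain the extra structure that every two non-adjacent vertices have a common neighbour.

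Next I would pick an edge $xy$ maximizing $d(x)+d(y)=:s$ and partition $V(G)=\{x,y\}\cup X\cup Y\cup Z$ with $X=N(x)\setminus\{y\}$, $Y=N(y)\setminus\{x\}$ and $Z$ the rest. Triangle-freeness forces $X$ and $Y$ to be independent, $x$ to have no neighbour in $Y$ and $y$ none in $X$, and hence $G[\{x,y\}\cup X\cup Y]$ to be bipartite with parts $\{x\}\cup Y$ and $\{y\}\cup X$. Consequently, deleting all edges meeting $Z$ makes $G$ bipartite, so $D_2(G)\le e_Z$, where $e_Z$ is the number of edges incident to $Z$. On the other hand $|Z|=n-s$, every edge of $G$ has degree-sum at most $s$ by the choice of $xy$, and edge-maximality forces each vertex of $Z$ to have a neighbour in both $X$ and $Y$, which together with triangle-freeness severely restricts the edges inside $Z$ and between $Z$ and $X\cup Y$. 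From this I would extract upper bounds for $e_Z$ (hence for $D_2(G)$) and for $e(G)$ purely in terms of $n$, $s$, and a few secondary parameters such as $|X|$, $|Y|$, and the distribution of edges around $Z$.

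Finally I would treat $s$, equivalently the size of $Z$, as the main variable. For a blow-up $H=C_5(n_1,\dots,n_5)$ on $n$ vertices one has $e(H)=\sum_i n_i n_{i+1}$ and, since an optimal bipartition of such a blow-up simply deletes one of the five bundles, $D_2(H)=\min_i n_i n_{i+1}$. The goal then becomes: for every pair $(e(G),D_2(G))$ produced by the structural analysis, choose class sizes with $\sum_i n_i=n$ so that $\sum_i n_i n_{i+1}\ge e(G)$ and $\min_i n_i n_{i+1}\ge D_2(G)$ at the same time — informally, two classes play the roles of the $x$- and $y$-sides, two more the roles of $X$ and $Y$, and the fifth absorbs $Z$. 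The hypothesis $e(G)\ge n^2/5$ is exactly what keeps $|Z|$ small enough for this to be feasible (the balanced blow-up sits precisely at $e=n^2/5$, $D_2=n^2/25$); below that density $Z$ can be so large that no $C_5$-blow-up dominates, which is why this particular threshold appears. I expect the main obstacle to be this last optimization step: reducing, by a convexity/smoothing argument, to two or three free class sizes, and then verifying the two inequalities through a short case analysis — according to whether $Z$ carries edges and whether there are $X$–$Y$ edges — since that is where the constant $1/5$ must be matched exactly.
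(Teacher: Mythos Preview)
The paper does not contain a proof of this statement: Theorem~\ref{ErdosC_5blowup} is quoted from Erd\H{o}s, Gy\H{o}ri and Simonovits \cite{Howmany} as background, and no argument for it appears anywhere in the present paper. There is therefore nothing here to compare your proposal against.

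As for the proposal itself, it is a plausible outline but not yet a proof. The reduction to edge-maximal $G$ and the partition $V(G)=\{x,y\}\cup X\cup Y\cup Z$ around a maximum-degree-sum edge are sound and do give $D_2(G)\le e_Z$. However, the decisive part --- producing, for every admissible pair $(e(G),D_2(G))$, class sizes $n_1,\dots,n_5$ with $\sum n_in_{i+1}\ge e(G)$ and $\min n_in_{i+1}\ge D_2(G)$ --- is only asserted, not carried out. You acknowledge this is ``the main obstacle'', and it genuinely is: the smoothing/convexity step and the case analysis are where the entire content of the theorem lies, including the exact role of the threshold $n^2/5$. Until that step is written out, the proposal remains a strategy rather than a proof.
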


This proved the Erd\H{o}s conjecture for graphs with at least $n^2/5$ edges. A simple probabilistic argument (e.g.~\cite{Howmany}) settles the conjecture for graphs with at most $2/25 n^2$ edges.\\
A related question was studied by Sudakov; he determined the maximum number of edges in a $K_4$-free graph which need to be removed in order to make it bipartite \cite{K4freebip}.
This problem for $K_6$-free graphs was solved by Hu, Lidick\'{y}, Martins, Norin and Volec~\cite{hulimanovo}.
\\
We will study the question of how many edges in a $K_{r+1}$-free graph need at most to be removed to make it $r$-partite. For $n\in \mathbb{N}$ and a graph $H$, $\ex(n,H)$ denote the Tur\'an number, i.e.~the maximum number of edges of an $H$-free graph. 
The Erd\H{o}s--Simonovits theorem \cite{Erdossimonovits} for cliques states that for every $\varepsilon>0$ there exists $\alpha >0$ such that if $G$ is a $K_{r+1}$-free graph on $n$ vertices with $e(G) > \ex(n,K_{r+1}) - \alpha n^2$, then $D_r(G)\leq \varepsilon n^2$.

F{\"u}redi \cite{Furedi} gave a nice short proof of the statement that a $K_{r+1}$-free graph $G$ on $n$ vertices with at least $\ex(n,K_{r+1})-t$ edges satisfies $D_r(G)\leq t$; thus providing a quantitative version of the Erd\H{o}s--Simonovits theorem. 
In \cite{hulimanovo} F{\"u}redi's result was strengthened for some values of $r$.
For small $t$, we will determine asymptotically how many edges are needed. For very small $t$, it is already known \cite{smallalpha} that $G$ has to be $r$-partite.  

\begin{theorem}[Brouwer \cite{smallalpha}]
\label{smalllalpha}
Let $r\geq 2$ and $n\geq 2r+1$ be integers. Let $G$ be a $K_{r+1}$-free graph on $n$ vertices with  $e(G)\geq \ex(n,K_{r+1})-\left\lfloor \frac{n}{r} \right\rfloor+2$. Then
\begin{align}
    D_r(G)=0.  
\end{align}
 \end{theorem}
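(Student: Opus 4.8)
The plan is to prove the equivalent statement that any $K_{r+1}$-free graph $G$ on $n\ge 2r+1$ vertices with $D_r(G)\ge1$ satisfies $e(G)\le\ex(n,K_{r+1})-\lfloor n/r\rfloor+1$. Fix an $r$-partition $V_1,\dots,V_r$ of $V(G)$ minimising the number $b:=D_r(G)$ of edges inside parts, henceforth the \emph{bad} edges. Since $G$ is not $r$-partite every $V_i$ is nonempty, and minimality forces $d_{V_i}(v)\le d_{V_j}(v)$, hence $d_{V_i}(v)\le d(v)/r$, for every $v\in V_i$. Let $m$ be the number of non-edges of $G$ with endpoints in distinct parts, and set $L:=\sum_i\binom{|V_i|}{2}-\sum_i\binom{\bar n_i}{2}\ge0$, where $\bar n_1,\dots,\bar n_r$ are the part sizes of $T_r(n)$. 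Counting pairs,
\[
e(G)=\binom n2-\sum_i\binom{|V_i|}2-m+b=\ex(n,K_{r+1})-L-m+b ,
\]
so the statement reduces to the inequality $L+m\ge b+\lfloor n/r\rfloor-1$. Two reductions are immediate: since $b\ge1$ we may assume $\lfloor n/r\rfloor\ge3$ (for $\lfloor n/r\rfloor\le2$ the hypothesis forces $e(G)=\ex(n,K_{r+1})$, hence $G=T_r(n)$, contradicting $D_r(G)\ge1$), and F\"uredi's theorem gives $e(G)\le\ex(n,K_{r+1})-b$, so we may also assume $b\le\lfloor n/r\rfloor-2$.

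The core is a local supersaturation estimate. Fix a bad edge $uv$ with, say, $u,v\in V_1$. In the complete $r$-partite graph on $V_1,\dots,V_r$, the sets $\{u,v\}\cup\{w_2,\dots,w_r\}$ with $w_j\in V_j$ form $\prod_{j\ge2}|V_j|$ copies of $K_{r+1}$ once $uv$ is added; as $G$ is $K_{r+1}$-free, each copy contains a non-edge of $G$, necessarily a cross-pair since $uv\in E(G)$. A cross non-edge incident to $u$ or $v$ with other endpoint in $V_j$ lies in a $1/|V_j|$ fraction of the copies, and one with both endpoints in $V_2\cup\dots\cup V_r$ in at most a $1/\mu^2$ fraction, where $\mu:=\min_{j\ge2}|V_j|$. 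Double counting thus gives $a/\mu+c/\mu^2\ge1$, where $a$ counts the cross non-edges incident to $\{u,v\}$ and $c$ those inside $V_2\cup\dots\cup V_r$; since these families are disjoint, $m\ge a+c\ge a+c/\mu\ge\mu$. If every part has size at least $\lfloor n/r\rfloor$ then $m\ge\mu\ge\lfloor n/r\rfloor$; otherwise the smallest part has some size $s<\lfloor n/r\rfloor$, a short convexity estimate gives $L\ge\lfloor n/r\rfloor-s$, and $\mu\ge s$, so $L+m\ge\lfloor n/r\rfloor$ in all cases. This settles $b=1$.

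The main work, which I expect to be the hardest step, is to extract the extra $b-1$ when $b\ge2$. The idea is to run the double count for all bad edges at once: two distinct bad edges $uv$ and $u'v'$ span additional copies of $K_{r+1}$ — through $\{u,v,u'\}$, or through $\{u,v,u',v'\}$ when the two edges lie in different parts, each together with a transversal of the remaining parts — forcing extra cross non-edges, so that summing the resulting inequalities and bounding the overlap of the families charged to the several bad edges yields $L+m\ge b+\lfloor n/r\rfloor-1$. The overlap is worst when the bad edges are \emph{clustered}, say all incident to one vertex $u_0$, in which case one additionally exploits the cross non-edges forced ``for free'' at $u_0$ (to begin with, every vertex of $V_2\cup\dots\cup V_r$ outside $N(u_0)$). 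It is moreover convenient to first settle the case of small minimum degree, which needs nothing beyond Tur\'an's theorem: if $\delta(G)\le n-\lceil n/r\rceil-\lfloor n/r\rfloor+1$, then deleting a minimum-degree vertex gives $e(G)\le\ex(n-1,K_{r+1})+\delta(G)\le\ex(n,K_{r+1})-\lfloor n/r\rfloor+1$ directly; in the complementary range all degrees are large, which makes the forced non-edges at a clustered vertex plentiful. A direct check of the few remaining small values of $n$ finishes the proof.
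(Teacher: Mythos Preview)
The paper does not prove this theorem at all: it is quoted from Brouwer, with a list of rediscoveries, and then used as a black box (to justify the lower bound \eqref{smallalpha} on $\alpha$). So there is no ``paper's own proof'' to compare your proposal against.

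On the merits of your proposal itself: the $b=1$ case is sound --- the supersaturation count on a single bad edge together with the convexity bound $L\ge\lfloor n/r\rfloor-s$ does yield $L+m\ge\lfloor n/r\rfloor$. The genuine gap is the $b\ge2$ case, which you yourself flag as ``the hardest step'' and leave as a sketch. The idea of running the $K_{r+1}$-count over all bad edges simultaneously and bounding overlaps is reasonable, but the overlap analysis is exactly where the difficulty lies: when the bad edges form a star at a single vertex $u_0$, the families of forced cross non-edges coincide almost entirely, and you have not shown how the ``extra non-edges at $u_0$'' recover the missing $b-1$. The minimum-degree reduction you state is correct and useful, but it does not by itself close the gap, and ``a direct check of the few remaining small values of $n$'' is not a substitute for an argument in the clustered regime. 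As written, this is a plausible plan rather than a proof; if you want to pursue it, the Kang--Pikhurko paper cited here gives a complete argument along related lines that you could consult.
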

This result was rediscovered in \cite{smallalpha3,smallalpha4,smallalpha5,smallalpha2}. We will study $K_{r+1}$-free graphs on fewer edges. 

\begin{theorem}
\label{trianglefreemain}
Let $r\geq 2$ be an integer. Then for all $n \geq 3r^2$ and for all $0\leq \alpha \leq 10^{-7}r^{-12}$ the following holds. Let $G$ be a $K_{r+1}$-free graph on $n$ vertices with 
\begin{align}
\label{cliquecondition}
    e(G)\geq \ex(n,K_{r+1})-t,
\end{align}    
    where $t=\alpha n^2$, then  
\begin{align}
D_r(G)\leq \left(\frac{2r}{3\sqrt{3}} + 30r^3\alpha^{1/6} \right)\alpha^{3/2} n^2. 
\end{align}
 
\end{theorem}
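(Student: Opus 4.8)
The plan is to quantify Füredi's argument carefully and then optimize the resulting bound over the relevant parameters. First I would recall the skeleton of Füredi's proof: given the $K_{r+1}$-free graph $G$, pick a vertex $v$ of minimum degree and apply induction/a symmetrization-type step, or more directly, use the observation that if $G$ has a vertex $v$ whose neighborhood $N(v)$ is ``close'' to an $(r-1)$-partite structure, one can move $v$ appropriately. The key quantitative insight is that $G$ being $K_{r+1}$-free forces each neighborhood $N(v)$ to be $K_r$-free, so by Turán's theorem $e(G[N(v)])\le \ex(d(v),K_r)$, and summing over $v$ gives control on the number of triangles (more precisely, $(r+1)$-cliques-minus-one). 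I would set $d=d(v)$ for the minimum-degree vertex and compare $e(G)$ against the Turán graph $T_r(n)$; the edge deficit $t=\alpha n^2$ localizes how far $G$ is from $T_r(n)$ both globally and, via convexity, at almost every vertex.

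Next I would make the structure explicit: write $\mathrm{def}(v) = \ex(d(v), K_r) - e(G[N(v)])$ for the ``local deficit.'' Using $e(G)\ge \ex(n,K_{r+1})-t$ together with the standard counting identity relating $e(G)$, the degree sequence, and $\sum_v e(G[N(v)])$, I would show that $\sum_v \mathrm{def}(v)$ is $O(\alpha n^3)$ and, crucially, that the ``shape'' of $G$ cannot deviate much from $T_r(n)$: there is a partition $V_1\cup\cdots\cup V_r$ with $\big||V_i|-n/r\big| = O(\sqrt\alpha\, n)$ and with at most $O(\alpha\, n^2)$ ``bad'' edges inside parts. Here the refinement over Füredi is that I want the number of edges \emph{inside} parts to scale like $\alpha^{3/2}n^2$, not $\alpha n^2$: the point is that an edge inside a part $V_i$ must have \emph{both} endpoints with restricted neighborhoods into the other parts (else one finds a $K_{r+1}$), and there are only $O(\sqrt\alpha\, n)$ vertices per part that are ``defective'' enough to host such an edge, while the number of defective \emph{pairs} within the defective set is governed by a further $\sqrt\alpha$ factor. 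Chaining these gives the $\alpha^{3/2}$ main term. The constant $\tfrac{2r}{3\sqrt3}$ should emerge from optimizing $x^2(1-x)$-type expression (maximized at $x=2/3$ with value $4/27$, whose square root over something gives $2/(3\sqrt3)$) coming from balancing the sizes of the defective vertex set against the density of bad edges on it; the $r^{-12}$ threshold on $\alpha$ and the $3r^2$ threshold on $n$ are there precisely to guarantee all these $O(\cdot)$ error terms are genuinely lower-order and the extremal configuration near $C_5$-blow-up type structures dominates.

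Then I would carry out the optimization that produces the stated bound. Let $S$ be the set of vertices $v$ with large local deficit (say $\mathrm{def}(v)\ge \delta\, n$ for a threshold $\delta$ to be chosen $\sim r\sqrt\alpha$); a convexity/Turán-stability argument shows $|S| \le c\sqrt\alpha\, n$ for an explicit $c$ depending on $r$. Edges we must delete are confined to $G[S]$ plus a lower-order correction, and $e(G[S])$ is bounded by combining $|S|\le c\sqrt\alpha\, n$ with the density bound coming from $G[S]$ being itself close to $(r-1)$-partite-in-the-relevant-sense. Writing $|S|=\beta n$ and tracking the dependence, $D_r(G)\le \tfrac12\beta^2 n^2\cdot(\text{density factor}) + (\text{error})$, and substituting the sharp value of $\beta$ in terms of $\sqrt\alpha$ yields the leading term $\tfrac{2r}{3\sqrt3}\alpha^{3/2}n^2$, with all remaining contributions absorbed into the $30r^3\alpha^{1/6}\cdot\alpha^{3/2}n^2 = 30r^3\alpha^{5/3}n^2$ slack.

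\textbf{Main obstacle.} The hard part will be obtaining the \emph{sharp} constant rather than merely the $\alpha^{3/2}$ order of magnitude: getting $\tfrac{2r}{3\sqrt3}$ requires the local-deficit bound $|S|\le c\sqrt\alpha\,n$ and the edge-density estimate on $G[S]$ to be simultaneously tight, which means the convexity inequalities and the applications of Turán's theorem inside neighborhoods must be used with no slack, and one must verify that the extremal case (a lopsided $C_5$-blow-up glued onto $T_{r-2}$, the conjectured worst example) is the one that saturates every inequality in the chain. Controlling the interaction between the ``defective set'' $S$ and its complement — ensuring that edges incident to $S$ but with the other endpoint outside $S$ don't need to be deleted, i.e. that such edges respect the $r$-partition — is the delicate bookkeeping step, and it is where the lower-order error terms (and hence the precise powers $\alpha^{1/6}$, $r^3$ in the correction) get pinned down.
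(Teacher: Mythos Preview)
Your plan diverges from the paper's argument in a way that leaves a genuine gap. You propose to work with a ``defective set'' $S$ of size $O(\sqrt{\alpha}\,n)$ defined via the local deficits $\mathrm{def}(v)=\ex(d(v),K_r)-e(G[N(v)])$, and to argue that the edges to be deleted sit inside $G[S]$, with the sharp constant emerging from balancing $|S|$ against the density of $G[S]$. Neither the scale of the exceptional set nor the mechanism for the constant matches what actually works.

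The paper does start from F\"uredi's degree-majorization partition $V_1\cup\cdots\cup V_r$, but the essential new step is to \emph{refine} it: one finds a copy of $K_r$ with one vertex $x_i$ in each $V_i$, each of near-maximum degree, and sets $X_i$ to be the common neighbourhood of $\{x_j:j\ne i\}$. These $X_i$ are \emph{genuinely independent} (any edge inside $X_i$ would complete a $K_{r+1}$), and the leftover $X=V(G)\setminus\bigcup X_i$ has size $|X|\le 5r^2\alpha n$, i.e.\ order $\alpha n$, not $\sqrt{\alpha}\,n$. The edges one must delete are not inside a defective set but rather from each $u\in X$ to the class $X_i$ it gets assigned to. The assignment is governed by the $r$-partite Tur\'an theorem applied to the $K_r$-free graph $N(u)\cap(X_1\cup\cdots\cup X_r)$: this gives $\min_{i\ne j}|N_{X_i}(u)|\,|N_{X_j}(u)|\le \sum_{i<j}e^c(X_i,X_j)$, and the right-hand side is shown (via an edge count) to be at most roughly $(1-\tfrac{1}{r}k)\alpha n^2$ where $|X|=k\alpha n$. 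Thus each $u$ in the ``low-degree'' part of $X$ has some $|N_{X_i}(u)|\le\sqrt{(1-k/r)\alpha}\,n$, and the total cost is about $k\alpha n\cdot\sqrt{(1-k/r)\alpha}\,n=z\sqrt{1-z/r}\cdot\alpha^{3/2}n^2$ with $z=k$. Maximizing $z\sqrt{1-z/r}$ at $z=2r/3$ gives $\tfrac{2r}{3\sqrt3}$.

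So the sharp constant comes from a trade-off between $|X|$ (which wants to be large) and the non-edge count $\sum e^c(X_i,X_j)$ (which shrinks as $|X|$ grows), not from a size-versus-density balance on a $\sqrt{\alpha}\,n$-sized set. Your proposal lacks both the common-neighbourhood refinement that produces independent $X_i$'s and the $r$-partite Tur\'an step that controls how cheaply each leftover vertex can be reassigned; without these, there is no clear route from the local-deficit quantities to the leading constant.
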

Note that we did not try to optimize our bounds on $n$ and $\alpha$ in the theorem. One could hope for a slightly better error term of $30r^3\alpha^{5/3}$ in Theorem~\ref{trianglefreemain}, but the next natural step would be to prove a structural version. \\
To state this structural version we introduce some definitions. The blow-up of a graph $G$ is obtained by replacing every vertex $v\in V(G)$ with finitely many copies so that the copies of two vertices are adjacent if and only if the originals are. \\
For two graphs $G$ and $H$, we define $G \otimes H$ to be the graph on the vertex set $V(G)\cup V(H)$ with $gg'\in E(G \otimes H)$ iff $gg'\in E(G)$, $hh'\in E(G \otimes H)$ iff $hh'\in E(H)$ and $gh\in E(G \otimes H)$ for all $g\in V(G)$, $h \in V(H)$. 

\begin{conjecture}
\label{K_rstructuralconj}
Let $r\geq 2$ be an integer and $n$ sufficiently large. Then there exists $\alpha_0>0$ such that for all $0\leq \alpha \leq \alpha_0$ the following holds. For every $K_{r+1}$-free graph $G$ on $n$ vertices there exists an unbalanced $K_{r-2} \otimes C_5$ blow-up $H$ on $n$ vertices with $e(H)\geq e(G)$ such that  
\begin{align}
D_r(G)\leq D_r(H).
\end{align} 
\end{conjecture}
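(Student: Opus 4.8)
The plan is to run a stability-plus-symmetrization program that starts from the quantitative bound of Theorem~\ref{trianglefreemain} and ends at an explicit $K_{r-2}\otimes C_5$ blow-up, never decreasing $e$ or $D_r$ along the way; note that when $r=2$ the target $K_0\otimes C_5$ is $C_5$ and the statement is exactly Theorem~\ref{ErdosC_5blowup} in the small-$\alpha$ regime, which is the range the conjecture asks for. Set $\alpha n^2=\ex(n,K_{r+1})-e(G)$. If $\alpha n^2<\lfloor n/r\rfloor-2$ then $D_r(G)=0$ by Theorem~\ref{smalllalpha} and one takes $H$ to be a complete $r$-partite graph with $\ex(n,K_{r+1})$ edges (a degenerate $K_{r-2}\otimes C_5$ blow-up), so assume $t=\alpha n^2\ge\lfloor n/r\rfloor-2$ and $\alpha\le\alpha_0$ with $\alpha_0$ small enough that Theorem~\ref{trianglefreemain} applies. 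That theorem gives $D_r(G)=O_r(\alpha^{3/2}n^2)$, so $G$ admits an $r$-partition $V_1\sqcup\cdots\sqcup V_r$ whose set $B$ of edges inside parts has size exactly $D_r(G)$ and is $o(n^2)$. Because $G$ is $K_{r+1}$-free, every $uv\in B$ with $u,v\in V_i$ has the property that $N(u)\cap N(v)$ contains no transversal $K_{r-1}$ of $\{V_j:j\ne i\}$; combining this with the fact that the bipartite graphs between distinct parts are near-complete (another consequence of $e(G)$ being close to $\ex(n,K_{r+1})$), one shows that all but $O_r(\alpha n)$ vertices have a \emph{clean type}: they lie in parts each completely joined to $r-2$ of the others, and, after moving $O_r(\alpha n)$ vertices between parts, the edges of $B$ are confined to within at most two of the parts and form a pattern resembling a blow-up of a triangle-free graph close to $C_5$.

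The second stage is a compression argument in the spirit of the $r=2$ case. Working inside the cleaned-up structure one iterates two operations: \emph{neighborhood equalization} --- if $u,w$ are non-adjacent and $w$ has a ``worse'' local contribution than $u$, delete every edge at $w$ and replace it by a copy of the corresponding edge at $u$ --- and \emph{rebalancing} --- move a vertex to a part in which it has fewest neighbors. Equalization never creates a $K_{r+1}$ (a clique through $w$ pulls back, replacing $w$ by $u$, to a clique of the same size in the old graph) and does not decrease $e(G)$ once $d(u)\ge d(w)$; the heart of the matter is to choose the pairs $(u,w)$ so that $D_r$ also does not decrease, which one does by showing that any $r$-partition of the new graph pulls back, via a local exchange at the duplicated vertex, to an $r$-partition of the old graph with no more bad edges. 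Since the ambient structure has only $O_r(1)$ types (the $r-2$ complete parts, plus at most five classes making up the near-$C_5$ core), these operations terminate at a graph $H$ in which any two vertices of the same type are twins; that is, $H$ is a blow-up of a fixed $K_{r+1}$-free graph $H_0$ on at most $r+3$ vertices with $e(H)\ge e(G)$ and $D_r(H)\ge D_r(G)$. A finite case analysis of the $O_r(1)$ possibilities for $H_0$, together with the extremal optimization already encoded in the constant $\frac{2r}{3\sqrt{3}}$ of Theorem~\ref{trianglefreemain}, should leave $H_0=K_{r-2}\otimes C_5$ as the only configuration that can be extremal, and optimizing the class sizes of the blow-up subject to $e(H)\ge e(G)$ then finishes the proof.

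The main obstacle is the monotonicity of $D_r$ under neighborhood equalization. Unlike $e(G)$, which is a linear functional of the edge set, $D_r(G)$ is a minimum over the exponentially many $r$-partitions of $V(G)$, so a single equalization step can create a genuinely better partition of the new graph that corresponds to no good partition of $G$. For $r=2$ the five-vertex target $C_5$ is rigid enough that Erd\H{o}s, Gy\H{o}ri and Simonovits controlled this by directly classifying the types of a vertex relative to an optimal bipartition; for general $r$ the interaction between the $K_{r-2}$ block and the $C_5$ core multiplies both the number of types and the ways an $r$-partition can route through them, and one likely needs either a robust stability statement for the $D_r$-extremal configuration or an LP/flag-algebra computation that pins the local neighborhoods down to $o(n)$ error before any symmetrization begins. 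A reasonable first milestone is $r=3$, where $K_1\otimes C_5$ is the $5$-wheel and the cleaned-up structure has a single complete part, and then to attempt an induction on $r$ that peels off one complete part at a time.
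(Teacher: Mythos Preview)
The statement you are attempting to prove is labeled a \emph{Conjecture} in the paper, and the paper does not prove it; the authors only offer Theorem~\ref{K_rexample} as ``strong evidence'' for it. There is therefore no paper proof to compare your proposal against.

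Your write-up is honest about this: it is a programme, not a proof, and you yourself identify the decisive gap, namely the monotonicity of $D_r$ under neighborhood equalization. That gap is real and is precisely why the statement remains open. The symmetrization/compression you describe controls $e(G)$ and preserves $K_{r+1}$-freeness, but the pull-back argument you sketch for $D_r$ (``any $r$-partition of the new graph pulls back \dots\ to an $r$-partition of the old graph with no more bad edges'') is not justified; an optimal $r$-partition of the symmetrized graph may separate the two twin vertices, and there is no reason the corresponding partition of the original graph is competitive. Likewise, the claim that the process ``terminates at a graph $H$ in which any two vertices of the same type are twins'' with only $O_r(1)$ types presupposes a classification of types relative to an optimal $r$-partition that you have not established; the stability input from Theorem~\ref{trianglefreemain} controls only $\sum_i e(G[V_i])$, not the finer structure you need. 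Finally, the ``finite case analysis'' that is supposed to single out $K_{r-2}\otimes C_5$ among all $K_{r+1}$-free graphs on at most $r+3$ vertices is asserted rather than carried out, and phrases like ``should leave'' and ``one likely needs'' confirm that what you have is a roadmap, not a proof.

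In short: your outline is a plausible attack consistent with the paper's heuristics, but it does not close the conjecture, and neither does the paper.
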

This conjecture can be seen as a generalization of Theorem~\ref{ErdosC_5blowup}. We will prove that Theorem~\ref{trianglefreemain} is asymptotically sharp by describing an unbalanced blow-up of $K_{r-2} \otimes C_5$ that needs at least that many edges to be removed to make it $r$-partite. This gives us a strong evidence that Conjecture~\ref{K_rstructuralconj} is true.

\begin{theorem}
\label{K_rexample}
Let $r,n\in \mathbb{N}$ and $0<\alpha < \frac1{4r^4}$. Then there exists a $K_{r+1}$-free graph on $n$ vertices with 

$$e(G) \geq \ex(n,K_{r+1})  -\alpha n^2 + \frac{4r}{3\sqrt{3}} \alpha^{3/2}n^2  -\frac{2r(r-3)}{9}\alpha^2 n^2$$ 
\noindent
and $$D_r(G)\geq \frac{2r}{3\sqrt{3}}  \alpha^{3/2}n^2.$$
\end{theorem}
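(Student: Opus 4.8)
The goal is to build, for given $\alpha$, a $K_{r+1}$-free graph $G$ on $n$ vertices with the stated edge count and the stated lower bound on $D_r(G)$. The natural candidate, given Conjecture~\ref{K_rstructuralconj}, is an unbalanced blow-up of $K_{r-2}\otimes C_5$. So the plan is: take $r-2$ "clique classes" $A_1,\dots,A_{r-2}$ that are completely joined to everything, and a $C_5$-blow-up on the remaining vertices with five classes $V_1,\dots,V_5$ in cyclic order, where $V_iV_{i+1}$ are complete bipartite and $V_iV_{i+2}$ are empty. This graph is $K_{r+1}$-free since a clique uses at most one vertex from each $A_j$ and at most two mutually adjacent classes of the $C_5$, giving at most $(r-2)+2=r$ vertices. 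I would make the five $C_5$-classes have sizes governed by one parameter: roughly four of them of equal size $a$ and one of size $a - \delta n$ (or some such single-parameter perturbation), with $|A_j| = b$ for all $j$, subject to $(r-2)b + 4a + (a-\delta n) = n$ — essentially matching what makes the Erd\H{o}s--Gy\H{o}ri--Simonovits extremal $C_5$-blow-up work in the $r=2$ case and padding with clique classes for larger $r$.

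**Computing the edge deficit and choosing parameters.** Next I would compute $e(G)$ and compare to $\ex(n,K_{r+1}) = e(T_r(n))$ where $T_r(n)$ is the Tur\'an graph. Writing everything in terms of the class sizes, the clique classes contribute $\binom{r-2}{2}b^2 + (r-2)b(n-(r-2)b)$ plus the edges inside the $C_5$-blow-up, which is $\sum_{i} |V_i||V_{i+1}|$. The point of the $C_5$ is that, compared to splitting the same vertices into two classes (which is what the Tur\'an graph would do optimally), a balanced $C_5$-blow-up on $m$ vertices has $5(m/5)^2 = m^2/5$ edges versus $m^2/4$ for the bipartite split — a deficit of $m^2/20$. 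An \emph{unbalanced} $C_5$-blow-up trades some of this deficit for a different $D_r$ behavior. I would parametrize so that the deficit from $\ex(n,K_{r+1})$ works out to exactly $\alpha n^2 - \frac{4r}{3\sqrt3}\alpha^{3/2}n^2 + \frac{2r(r-3)}{9}\alpha^2 n^2$; this is a routine but delicate quadratic optimization in the perturbation parameter $\delta$, and it will force $\delta$ to be of order $\alpha^{1/2}$ (which is why $\alpha^{3/2}$ appears). The constraint $\alpha < \frac{1}{4r^4}$ should be exactly what is needed to keep all class sizes nonnegative.

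**Lower-bounding $D_r(G)$.** The remaining ingredient is showing that any $r$-coloring of $G$ leaves at least $\frac{2r}{3\sqrt3}\alpha^{3/2}n^2$ monochromatic edges. Since the $A_j$ are completely joined to everything and to each other, in any proper-ish $r$-partition, each $A_j$ must essentially get its own color (any vertex sharing a color with all of $A_j$ plus a vertex of another $A_{j'}$ would create a monochromatic structure; more carefully, spreading $A_j$ across colors is costly because $A_j$ is a clique), leaving essentially one color for the entire $C_5$-blow-up. But a $C_5$-blow-up on $m$ vertices is not bipartite — wait, it needs $r - (r-2) = 2$ colors, not one. Let me reconsider: the $C_5$-blow-up part must be made bipartite using the colors not "used up" by the clique classes, and since $C_5$ is not bipartite, making a $C_5$-blow-up with class sizes $|V_i|$ bipartite requires deleting at least $\min_i |V_i||V_{i+1}|$ edges (delete one of the five complete bipartite pairs) — and more precisely the true minimum over all bipartitions. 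I would prove a clean lemma: to make a $C_5$-blow-up with parts $x_1,\dots,x_5$ bipartite one must delete at least $\min$ over the five "cut one edge" options, i.e. $\min_i x_ix_{i+1}$, and then argue the clique-class colors can't help. The main obstacle is the bookkeeping to show the clique classes genuinely cost $r-2$ colors and cannot be cleverly merged with $C_5$-classes to do better; after that, plugging the chosen class sizes into $\min_i x_i x_{i+1}$ and matching it to $\frac{2r}{3\sqrt3}\alpha^{3/2}n^2$ is again a short calculation. I expect the coloring lower bound — formalizing "the clique classes must be monochromatic and use distinct colors, up to a negligible gain" — to be the trickiest part to write rigorously, though morally it is clear.
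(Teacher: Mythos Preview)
Your high-level construction is exactly right --- the paper also takes an unbalanced blow-up of $K_{r-2}\otimes C_5$ --- but your proposed parametrization is a genuine gap. With four $C_5$-classes of equal size $a\approx 2n/(5r)$ and one of size $a-\delta n$, the deficit from $\ex(n,K_{r+1})$ is of constant order $\Theta(n^2/r^2)$, not $\alpha n^2$; and the minimum adjacent product $\min_i |V_i||V_{i+1}|$ is likewise of constant order, not $\alpha^{3/2}n^2$. A single perturbation parameter cannot repair this: the $\alpha^{3/2}$ term arises as a product of two classes at \emph{different} scales, so the construction genuinely needs two free parameters.

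The paper's $C_5$-part has \emph{three} scales. In cyclic order $A,X,B,C,D$ it sets
\[
|X|=\tfrac{2r}{3}\alpha n,\qquad |A|=|B|=\sqrt{\tfrac{\alpha}{3}}\,n,\qquad |C|=|D|=\tfrac{1}{r}\bigl(1-\tfrac{2r}{3}\alpha\bigr)n-\sqrt{\tfrac{\alpha}{3}}\,n,
\]
and each clique class $X_i$ has size $\tfrac{1}{r}(1-\tfrac{2r}{3}\alpha)n$. The key structural point you are missing is that $|A|+|C|=|B|+|D|=|X_i|$, so $G\setminus X$ is a \emph{balanced} complete $r$-partite graph on the classes $A\cup C,\,B\cup D,\,X_1,\dots,X_{r-2}$, minus exactly the $|A||B|$ missing edges between $A$ and $B$. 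This makes $e(G)=\binom{r}{2}|X_i|^2-|A||B|+e(X,X^c)$ a two-line computation yielding the stated formula, and $D_r(G)\ge |X|\cdot|A|=\tfrac{2r}{3}\alpha n\cdot\sqrt{\alpha/3}\,n=\tfrac{2r}{3\sqrt3}\alpha^{3/2}n^2$. In other words, the right picture is not a near-balanced pentagon with one dented class, but a near-bipartite pentagon (two large classes $C,D$) with a tiny apex $X$ flanked by two intermediate classes $A,B$.

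For the lower bound on $D_r(G)$, the bookkeeping you anticipate --- showing the clique classes cannot profitably share colors with the $C_5$-part --- is exactly what the paper avoids: it simply invokes \cite[Theorem~7]{Howmany}, which already gives that for such a blow-up the optimum is to delete the edges between the two smallest classes. So the step you flagged as trickiest is a black-box citation in the paper.
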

In Kang-Pikhurko's proof \cite{smallalpha5} of Theorem~\ref{smalllalpha} the case $e(G)=\ex(n,K_{r+1})-\left\lfloor n/r \right\rfloor+1$ is studied. In this case they constructed a family of $K_{r+1}$-free non-$r$-partite graphs, which includes our extremal graph, for that number of edges.\\
We recommend the interested reader to read the excellent survey \cite{surveyNiki} by Nikiforov. He gives a good overview on further related stability results, for example on guaranteeing large induced $r$-partite subgraphs of $K_{r+1}$-free graphs. \\
We organize the paper as follows. In Section~\ref{sectionK_rfree} we prove Theorem~\ref{trianglefreemain} and in Section~\ref{sectionK_rexample} we give the sharpness example, i.e. we prove Theorem~\ref{K_rexample}.

\section{Proof of Theorem~\ref{trianglefreemain} }
\label{sectionK_rfree}
Let $G$ be an $n$-vertex $K_{r+1}$-free graph with $e(G)\geq \text{ex}(n,K_{r+1})-t$, where $t=\alpha n^2$. We will assume that $n$ is sufficiently large. Furthermore, by Theorem~\ref{smalllalpha} we can assume that 
\begin{align}
    \label{smallalpha}
    \alpha \geq \frac{ \left \lfloor \frac{n}{r} \right \rfloor-2}{n^2} \geq \frac{1}{2rn}.
\end{align}
This also implies that $t \geq r$ because $n\geq 3r^2$. During our proof we will make use of Tur\'an's theorem and a version of Tur\'an's theorem for $r$-partite graphs multiple time. Tur\'an's theorem  \cite{Turanstheorem} determines the maximum number of edges in a $K_{r+1}$-free graph.

\begin{theorem}[Tur\'an~\cite{Turanstheorem}]
Let $r\geq 2$ and $n\in \mathbb{N}$. Then,
$$\frac{n^2}{2} \left(1-\frac{1}{r} \right) - \frac{r}{2} \leq \text{ex}(n,K_{r+1}) \leq \frac{n^2}{2} \left(1-\frac{1}{r}\right).$$
\end{theorem}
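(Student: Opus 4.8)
The stated inequality is a convenient two-sided form of Tur\'an's theorem: the lower bound is witnessed by the Tur\'an graph, while the upper bound is a (very slightly weakened) version of the extremal statement $e(G)\le e(T_r(n))$. I would treat the two bounds separately and keep the constants explicit.

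For the lower bound, let $T_r(n)$ be the complete $r$-partite graph on $n$ vertices with parts as equal as possible, i.e.\ with $s$ parts of size $q+1$ and $r-s$ parts of size $q$, where $n=qr+s$ and $0\le s<r$. This graph is $K_{r+1}$-free, since among any $r+1$ vertices two lie in a common part and are thus non-adjacent; hence $\ex(n,K_{r+1})\ge e(T_r(n))$. Writing the edge count as $e(T_r(n))=\tfrac12\bigl(n^2-\sum_i n_i^2\bigr)$, where the $n_i$ are the part sizes, a short computation gives $\sum_i n_i^2=\tfrac{n^2}{r}+\tfrac{s(r-s)}{r}\le\tfrac{n^2}{r}+\tfrac r4$, so $e(T_r(n))\ge\tfrac{n^2}{2}\bigl(1-\tfrac1r\bigr)-\tfrac r8$, which is stronger than the claimed bound.

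For the upper bound I would induct on $n$ with $r$ fixed. If $n\le r$ then $e(G)\le\binom n2\le\tfrac{n^2}{2}\bigl(1-\tfrac1r\bigr)$, the second inequality being equivalent to $n\le r$. If $n>r$, let $G$ have the maximum number of edges among $K_{r+1}$-free graphs on $n$ vertices; then $G$ is edge-maximal, and since $n>r$ gives $G\ne K_n$, maximality yields a non-edge whose addition creates a $K_{r+1}$, so $G$ contains a clique $A$ on $r$ vertices. Put $B=V(G)\setminus A$. Then $e(G[A])=\binom r2$; every vertex of $B$ has at most $r-1$ neighbours in $A$, for otherwise it would complete a $K_{r+1}$ with $A$, so $e_G(A,B)\le(r-1)(n-r)$; and $e(G[B])\le\tfrac{(n-r)^2}{2}\bigl(1-\tfrac1r\bigr)$ by the induction hypothesis. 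Adding these three contributions and factoring out $\tfrac{r-1}{2r}$ collapses the sum to $\tfrac{r-1}{2r}\bigl(r+(n-r)\bigr)^2=\tfrac{n^2}{2}\bigl(1-\tfrac1r\bigr)$.

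This is classical and presents no real obstacle; the only steps needing a little attention are the arithmetic bounding $\sum_i n_i^2$ in the lower bound, so that the additive error stays of order $r$, and the remark that an edge-maximal $K_{r+1}$-free graph on more than $r$ vertices necessarily contains a $K_r$ — which is exactly what makes the clique-peeling in the inductive step legitimate. Both are routine. (One could alternatively prove the upper bound via Zykov symmetrization, reducing $G$ to a complete multipartite graph and then optimizing part sizes, but the clique-peeling induction is the most direct route to the stated form.)
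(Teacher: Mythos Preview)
Your proof is correct. The paper does not actually supply a proof of this statement: it is stated as Tur\'an's theorem with a citation to \cite{Turanstheorem} and used as a black box, so there is nothing to compare against on the paper's side. Your argument is one of the standard routes --- the Tur\'an graph witnesses the lower bound (and your computation $\sum_i n_i^2=\tfrac{n^2}{r}+\tfrac{s(r-s)}{r}\le\tfrac{n^2}{r}+\tfrac r4$ is correct, giving the slightly sharper $-\tfrac r8$), and the clique-peeling induction for the upper bound is clean and the arithmetic $\binom r2+(r-1)(n-r)+\tfrac{r-1}{2r}(n-r)^2=\tfrac{r-1}{2r}n^2$ checks out. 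The one point worth a remark is the base of the induction: when $n\le r$ every graph on $n$ vertices is $K_{r+1}$-free, so $\ex(n,K_{r+1})=\binom n2$, and your inequality $\binom n2\le\tfrac{n^2}{2}(1-\tfrac1r)$ is exactly $n\le r$, as you say.
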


Denote $K(n_1,\ldots,n_r)$ the complete $r$-partite graph whose $r$ color classes have sizes
$n_1,\ldots,n_r$, respectively. Turan\'s theorem for $r$-partite graphs states the following.

\begin{theorem}[folklore]
\label{Turanrpartite}
Let $r\geq 2$ and $n_1,\ldots,n_r \in \mathbb{N}$ satisfying $n_1\leq \ldots \leq n_r$. For a $K_r$-free subgraph $H$ of $K(n_1,\ldots,n_r)$, we have 

$$ e(H)\leq e(K(n_1,...,n_r))-n_1n_2.$$
\end{theorem}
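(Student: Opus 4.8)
The plan is to pass to the \emph{missing} edges. Let $V_1,\dots,V_r$ denote the parts of $K(n_1,\dots,n_r)$, with $|V_i|=n_i$, and for $i\neq j$ let $\bar e_{ij}$ be the number of pairs $\{x,y\}$ with $x\in V_i$, $y\in V_j$ that are \emph{not} edges of $H$. Since every pair of vertices in different parts is an edge of $K(n_1,\dots,n_r)$, we have $e(K(n_1,\dots,n_r))-e(H)=\sum_{\{i,j\}}\bar e_{ij}$, so the statement to prove is exactly $\sum_{\{i,j\}}\bar e_{ij}\ge n_1n_2$. We may assume every $n_i\ge 1$, since otherwise $n_1=0$ and there is nothing to prove.

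The key observation is that every \emph{transversal} of the parts --- a set $T$ obtained by choosing one vertex from each $V_i$ --- must contain a missing edge. Indeed $T$ induces a copy of $K_r$ in $K(n_1,\dots,n_r)$, and since $H$ is $K_r$-free at least one of the $\binom{r}{2}$ pairs inside $T$ fails to be an edge of $H$. Hence, counting incident pairs (transversal, missing edge contained in it), the number of such pairs is at least the number $N:=\prod_{i=1}^r n_i$ of transversals.

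On the other hand, a fixed missing edge between $V_i$ and $V_j$ is contained in exactly $\prod_{k\neq i,j}n_k=N/(n_in_j)$ transversals. Summing over all missing edges therefore gives
\[
\sum_{\{i,j\}}\bar e_{ij}\cdot\frac{N}{n_in_j}\ \ge\ N,\qquad\text{i.e.}\qquad \sum_{\{i,j\}}\frac{\bar e_{ij}}{n_in_j}\ \ge\ 1.
\]
Finally, since $n_1\le n_2\le\dots\le n_r$, the product $n_in_j$ over distinct indices is minimized by $n_1n_2$, so $\bar e_{ij}\ge n_1n_2\cdot\bar e_{ij}/(n_in_j)$ for every pair; summing and invoking the displayed inequality yields $\sum_{\{i,j\}}\bar e_{ij}\ge n_1n_2$, which is the claim.

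There is no real obstacle here beyond bookkeeping; the only points that need care are the incidence count $N/(n_in_j)$ and the observation that the case with an empty part is genuinely trivial. (If one prefers an argument avoiding the averaging step, one can instead induct on $r$ by deleting a vertex from a largest part $V_r$; this runs smoothly whenever $n_r>n_{r-1}$ but requires an extra argument when all parts are equal, which is precisely the annoyance that the transversal-counting proof sidesteps.)
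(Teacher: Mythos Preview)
Your transversal-counting argument is correct: every choice of one vertex per part spans a $K_r$ in $K(n_1,\dots,n_r)$ and hence must contain at least one non-edge of $H$; the double count then gives $\sum_{\{i,j\}}\bar e_{ij}/(n_in_j)\ge 1$, and since $n_in_j\ge n_1n_2$ for every pair this yields $\sum_{\{i,j\}}\bar e_{ij}\ge n_1n_2$ as required. The paper does not supply its own proof of this folklore statement --- it simply refers the reader to an external reference --- so there is no in-paper argument to compare against; your self-contained averaging proof is a perfectly good substitute.
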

For a proof of this folklore result see for example \cite[Lemma 3.3]{sparseKr+1}.

We denote the maximum degree of $G$ by $\Delta(G)$.
For two disjoint subsets $U,W$ of $V(G)$, write $e(U,W)$ for the number of edges in $G$ with one endpoint in $U$ and the other endpoint in $W$. We write $e^c(U,W)$ for the number of non-edges between $U$ and $W$, i.e. $e^c(U,W)=|U||W|-e(U,W)$.\\
F{\"u}redi~\cite{Furedi} used Erd\H{o}s' degree majorization algorithm \cite{Erdosdegreealg} to find a vertex partition with some useful properties. We include a proof for completeness. 

\begin{lemma}[F{\"u}redi~\cite{Furedi}]
Let $t,r,n\in \mathbb{N}$ and $G$ be an $n$-vertex $K_{r+1}$-free graph with $e(G)\geq \ex(n,K_{r+1})-t$. Then there exists a vertex partition $V(G)=V_1\cup \ldots \cup V_r$ such that
\begin{align}
    \sum_{i=1}^re(G[V_i])\leq t, \quad \Delta(G)=\sum_{i=2}^r |V_i| \quad \text{and} \quad  \sum_{1\leq i < j \leq r}e^c(V_i,V_j) \leq 2t. \label{eq:nonedgs} 
\end{align}
\end{lemma}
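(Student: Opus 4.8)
The plan is to follow Erdős' degree majorization algorithm as Füredi did, and then verify the three claimed properties one at a time. Concretely, I would build the partition greedily: set $G_1 = G$, and having defined $G_i$ on vertex set $U_i$ (with $U_1 = V(G)$), pick a vertex $v_i \in U_i$ of maximum degree in $G_i$, let $V_i$ be the non-neighborhood of $v_i$ in $G_i$ (so $v_i \in V_i$), and let $U_{i+1} = N_{G_i}(v_i)$ with $G_{i+1} = G[U_{i+1}]$. Since $G$ is $K_{r+1}$-free, this process terminates after at most $r$ steps: the vertices $v_1, v_2, \dots$ form a clique (each $v_{i+1}$ lies in $N_{G_i}(v_i) \subseteq N_G(v_j)$ for all $j \le i$), so there are at most $r$ of them, and we obtain a partition $V(G) = V_1 \cup \dots \cup V_r$ (padding with empty classes if the process stops early). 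This is the standard setup; the content is in the three inequalities.

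For the first inequality, $\sum_{i=1}^r e(G[V_i]) \le t$, the key observation is that when we pass from $G_i$ to $G_{i+1}$ we delete vertex $v_i$ together with all of $V_i \setminus \{v_i\}$, and in doing so we lose exactly $\deg_{G_i}(v_i) = |U_{i+1}|$ edges incident to $v_i$, plus the edges inside $V_i \setminus \{v_i\}$, plus the edges between $V_i \setminus \{v_i\}$ and $U_{i+1}$. I would instead count via the following accounting: the Turán-type bound comes from comparing $G$ to the complete $r$-partite graph $K = K(|V_1|, \dots, |V_r|)$ on the same classes. Since $v_i$ has maximum degree in $G_i$ and is adjacent to all of $U_{i+1} = V_{i+1} \cup \dots \cup V_r$, every vertex $u \in V_i$ satisfies $\deg_{G_i}(u) \le |U_{i+1}| = \sum_{j>i} |V_j|$, which is exactly the number of $K$-neighbors of $u$ lying in later classes. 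Summing the deficiencies, one gets $e(K) - e(G) \ge \sum_i e(G[V_i])$ after bookkeeping the within-class edges; combined with $e(K) \le \ex(n, K_{r+1})$ (Turán's theorem, since $K$ is $K_{r+1}$-free) and $e(G) \ge \ex(n,K_{r+1}) - t$, this yields $\sum_i e(G[V_i]) \le t$. The second claim, $\Delta(G) = \sum_{i=2}^r |V_i|$, follows because $\deg_G(v_1) = |U_2| = \sum_{i \ge 2} |V_i|$ by construction, and $\Delta(G) = \deg_{G_1}(v_1)$ since $v_1$ was chosen to maximize degree in $G_1 = G$; one should also check $\deg_{G}(v_1) = \deg_{G_1}(v_1)$, which is immediate.

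For the third inequality, $\sum_{i<j} e^c(V_i, V_j) \le 2t$, I would argue that the non-edges between classes are "paid for" by the edge deficit as well, but each such non-edge can be double-counted relative to the first bound. The cleanest route is: $e(K) - e(G) = \sum_{i<j} e^c(V_i,V_j) - \sum_i e(G[V_i])$ — wait, more precisely $e(G) = \sum_i e(G[V_i]) + \sum_{i<j} e(V_i,V_j)$ and $e(K) = \sum_{i<j}|V_i||V_j|$, so $e(K) - e(G) = \sum_{i<j} e^c(V_i,V_j) - \sum_i e(G[V_i])$. Since $e(K) - e(G) \le t$ and $\sum_i e(G[V_i]) \le t$, we get $\sum_{i<j} e^c(V_i,V_j) \le t + \sum_i e(G[V_i]) \le 2t$. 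I expect the main obstacle to be nailing the first inequality rigorously: the degree-majorization argument requires care to show that summing the maximum-degree bounds over all classes correctly accounts for every edge exactly once and produces the comparison with $e(K)$ without off-by-factors; once that is set up, the second and third claims are essentially immediate consequences.
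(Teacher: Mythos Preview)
Your proposal is correct and follows essentially the same approach as the paper: build the partition via Erd\H{o}s' degree majorization, derive $\Delta(G)=\sum_{i\ge 2}|V_i|$ directly from the construction, and obtain the non-edge bound from the identity $e(K)-e(G)=\sum_{i<j}e^c(V_i,V_j)-\sum_i e(G[V_i])$ together with $e(K)\le \ex(n,K_{r+1})$ and the first inequality. In fact you go slightly further than the paper, which simply cites F\"uredi for $\sum_i e(G[V_i])\le t$; your sketch of the bookkeeping (summing $\deg_{G_i}(u)\le |U_{i+1}|$ over $u\in V_i$ and over $i$ to get $\sum_i e(G[V_i])+e(G)\le e(K)$) is exactly the standard argument and just needs to be written out cleanly.
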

\begin{proof}
Let $x_1\in V(G)$ be a vertex of maximum degree. Define $V_1:=V(G)\setminus N(x_1)$ and $V_1^+=N(x_1)$. Iteratively, let $x_i$ be a vertex of maximum degree in $G[V_{i-1}^+]$. Let $V_i:=V_{i-1}^+ \setminus N(x_i)$ and $V_i^+=V_{i-1}^+ \cap N(x_i)$. Since $G$ is $K_{r+1}$-free this process stops at $i\leq r$ and thus gives a vertex partition $V(G)=V_1 \cup \ldots \cup V_r$. \\
In the proof of \cite[Theorem 2]{Furedi}, it is shown that the partition obtained from this algorithm satisfies \[ \sum_{i=1}^r e(G[V_i]) \le t. \] By construction, \[ \sum_{i=2}^r |V_i| =  |V_1^+| = |N(x_1)| = \Delta(G). \] 
Let $H$ be the complete $r$-partite graph with vertex set $V(G)$ and all edges between $V_i$ and $V_j$ for $1\leq i< j \leq r$. The graph $H$ is $r$-partite and thus has at most $\ex(n, K_{r+1})$ edges. Finally, since $G$ has at most $t$ edges not in $H$ and at least $\ex(n, K_{r+1}) - t$ edges total, at most $2t$ edges of $H$ can be missing from $G$, giving us \[ \sum_{1 \le i < j \le r} e^c(V_i, V_j) \le 2t \] and proving the last inequality.
\end{proof}
For this vertex partition we can get bounds on the class sizes.

\begin{lemma}
For all $i\in [r]$, $|V_i|\in \{\frac{n}{r}-\frac{5}{2}\sqrt{\alpha} n, \frac{n}{r}+\frac{5}{2}\sqrt{\alpha}n  \}$ and thus also $$\Delta(G)\leq \frac{r-1}{r}n+ \frac{5}{2}\sqrt{\alpha}n.$$
\end{lemma}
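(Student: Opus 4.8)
The plan is to compare the number of edges of $G$ with the number of edges of the complete $r$-partite graph supported on the classes $V_1,\dots,V_r$ of the F\"uredi partition, exploiting that $\sum_{i<j}|V_i||V_j|$ is maximised by the balanced partition with a quantitative defect. Writing $|V_i| = \frac nr + d_i$, so that $\sum_{i=1}^r d_i = 0$, one has the elementary identity
\[
\sum_{1\le i<j\le r}|V_i||V_j| \;=\; \frac12\Bigl(n^2 - \sum_{i=1}^r |V_i|^2\Bigr) \;=\; \frac{n^2}{2}\Bigl(1-\frac1r\Bigr) - \frac12\sum_{i=1}^r d_i^2 .
\]

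First I would bound $e(G)$ from above by splitting edges into those inside a class and those between two classes: $e(G) = \sum_{i=1}^r e(G[V_i]) + \sum_{i<j} e(V_i,V_j) \le t + \sum_{i<j}|V_i||V_j|$, using the first inequality of \eqref{eq:nonedgs} and $e(V_i,V_j)\le |V_i||V_j|$. From below, Tur\'an's theorem gives $e(G) \ge \ex(n,K_{r+1}) - t \ge \frac{n^2}{2}\bigl(1-\frac1r\bigr) - \frac r2 - t$. Combining the two bounds with the identity above and cancelling the common term $\frac{n^2}{2}(1-\frac1r)$ yields $\sum_{i=1}^r d_i^2 \le 4t + r$. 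Since $n \ge 3r^2$ forces $t = \alpha n^2 \ge \lfloor n/r\rfloor - 2 \ge r$ (as already noted just after \eqref{smallalpha}), this simplifies to $\sum_{i=1}^r d_i^2 \le 5t = 5\alpha n^2$; in particular $d_i^2 \le 5\alpha n^2$ and hence $|d_i| \le \sqrt{5\alpha}\,n < \tfrac52\sqrt{\alpha}\,n$ for every $i$, which is the asserted two-sided bound on $|V_i|$. The degree estimate is then immediate from the middle part of \eqref{eq:nonedgs}: $\Delta(G) = \sum_{i=2}^r |V_i| = n - |V_1| \le n - \bigl(\frac nr - \tfrac52\sqrt\alpha\,n\bigr) = \frac{r-1}{r}n + \tfrac52\sqrt\alpha\,n$.

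I do not expect a genuine obstacle here, as the argument is a short convexity-plus-edge-counting computation. The only points needing slight care are getting the direction of the comparison right (the near-extremal quantity $e(G)$ must be squeezed between $\ex(n,K_{r+1})-t$ and $\sum_{i<j}|V_i||V_j|+t$) and absorbing the additive constants coming from Tur\'an's theorem and from the bound $4t+r$ into $\alpha n^2$ via $t\ge r$, so that the final constant comes out as the clean $\tfrac52$; the gap between $\sqrt5\approx 2.236$ and $\tfrac52$ leaves comfortable room.
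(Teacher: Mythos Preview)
Your argument is correct and follows essentially the same edge-counting/convexity route as the paper: both bound $\sum_{i<j}|V_i||V_j|$ from below via $e(G)-t$ and from above via the balanced partition, arriving at $\sum_i (|V_i|-n/r)^2 \le 4t+r$. Your parametrisation $|V_i|=n/r+d_i$ with $\sum d_i=0$ is in fact a mild streamlining of the paper's version, which fixes one class and applies Cauchy--Schwarz to the remaining $r-1$ classes to reach the same inequality one coordinate at a time.
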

\begin{proof}
We know that 
\[
	\sum_{1\leq i < j \leq r} |V_i||V_j| \geq e(G)- \sum_{i=1}^r e(G[V_i]) \geq \left(1-\frac{1}{r}  \right)  \frac{n^2}{2}-\frac{r}{2}-2t.
\]
Also, 
\[
	\sum_{1\leq i < j \leq r} |V_i||V_j|= \frac{1}{2} \sum_{i=1}^r |V_i|(n-|V_i|) = \frac{n^2}{2}-\frac{1}{2} \sum_{i=1}^r |V_i|^2.
\]
Thus, we can conclude that
\begin{equation}
 \label{eq:classsum} 
\sum_{i=1}^r |V_i|^2 \leq \frac{n^2}{r}+r+4t.
\end{equation}
Now, let $x=|V_1|-n/r$. Then,
\begin{align}
    \sum_{i=1}^r |V_i|^2 &=  \left(\frac{n}{r}+x\right)^2 + \sum_{i=2}^r |V_i|^2  \geq \left( \frac{n}{r}+ x \right)^2 + \frac{\left(  \sum_{i=2}^r |V_i|\right)^2}{r-1} \\
    &\geq \left( \frac{n}{r}+x \right)^2 + \frac{\left(n \left(1-\frac{1}{r} \right)-x \right)^2}{r-1} 
\geq \frac{n^2}{r} + x^2.
\end{align} 
Combining this with \eqref{eq:classsum}, we get $|x| \le \sqrt{r + 4t} \le \frac52 \sqrt t = \frac52 \sqrt{\alpha}n$, and thus
\[
	\frac{n}{r}-\frac{5}{2}\sqrt{\alpha} n \leq |V_1|\leq \frac{n}{r}+\frac{5}{2}\sqrt{\alpha} n.
\]
In a similar way we get the bounds on the sizes of the other classes. 
\end{proof}

\begin{lemma}
\label{goodKr}
The graph $G$ contains $r$ vertices $x_1\in V_1 ,\ldots,x_r\in V_r$ which form a $K_r$ and for every $i$
$$\deg(x_i)\geq n-|V_i|- 5r\alpha n.  $$ 
\end{lemma}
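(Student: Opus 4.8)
The strategy is a greedy/averaging argument: we want to pick one vertex from each class $V_1,\dots,V_r$, all of high degree, such that consecutively chosen vertices are mutually adjacent, and then argue that the resulting $r$-set is actually a clique. First I would control the number of ``bad'' vertices in each class. Call a vertex $v \in V_i$ \emph{bad} if $\deg(v) < n - |V_i| - 5r\alpha n$; equivalently $v$ has more than $5r\alpha n$ non-neighbours outside its own class (relative to the baseline $n-|V_i|$ which is what a vertex complete to all other classes would have). Since $\sum_{1\le i<j\le r} e^c(V_i,V_j) \le 2t = 2\alpha n^2$ from \eqref{eq:nonedgs}, the total number of non-edges between classes is at most $2\alpha n^2$, so for each $i$ the number of non-edges incident to $V_i$ and leaving $V_i$ is at most $2\alpha n^2$; hence the number of bad vertices in $V_i$ is at most $2\alpha n^2 / (5r\alpha n) = \tfrac{2n}{5r}$. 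Since $|V_i| \ge n/r - \tfrac52\sqrt\alpha n$ and $\alpha$ is tiny (so $\tfrac52\sqrt\alpha n$ is far smaller than, say, $n/(10r)$), each class retains a set $V_i' \subseteq V_i$ of good vertices with $|V_i'| \ge |V_i| - \tfrac{2n}{5r} \ge \tfrac{n}{2r}$ roughly — in any case a positive constant fraction of $n/r$.

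Next I would find the clique among good vertices. Work inside the ``good'' parts $V_1',\dots,V_r'$. Pick $x_1 \in V_1'$ arbitrarily; since $x_1$ is good it has at most $5r\alpha n$ non-neighbours in each $V_j'$, $j\ge 2$. Having chosen $x_1,\dots,x_{i-1}$ forming a clique with each $x_\ell \in V_\ell'$, restrict to the common neighbourhood: in $V_i'$ the set of vertices adjacent to all of $x_1,\dots,x_{i-1}$ has size at least $|V_i'| - (i-1)\cdot 5r\alpha n \ge |V_i'| - 5r^2\alpha n$, which is still positive (indeed $\ge \tfrac{n}{2r} - 5r^2\alpha n > 0$ since $\alpha \le 10^{-7}r^{-12}$ makes the subtracted term negligible). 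Choose any such vertex to be $x_i$; it is good by construction, and it is adjacent to $x_1,\dots,x_{i-1}$. Iterating through $i=1,\dots,r$ produces $x_1 \in V_1,\dots,x_r\in V_r$ that pairwise form edges, i.e.\ a $K_r$, each with the required degree bound. (One should double-check that $K_{r+1}$-freeness is not violated — it is not, since we only build a $K_r$; and the degree condition on the $x_i$ is exactly ``good''.)

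The only mild subtlety — and the step I'd expect to need the most care — is bookkeeping the constants so that the greedy selection never runs out of candidates: we need $|V_i| - (\text{bad vertices in } V_i) - (i-1)\cdot 5r\alpha n > 0$ simultaneously with the lower bound on $|V_i|$ from the previous lemma. Plugging in $|V_i| \ge n/r - \tfrac52\sqrt\alpha n$, bad count $\le \tfrac{2n}{5r}$, and $(i-1)\cdot 5r\alpha n \le 5r^2\alpha n$, it suffices that $\tfrac{n}{r} - \tfrac{2n}{5r} - \tfrac52\sqrt\alpha n - 5r^2\alpha n > 0$, i.e.\ $\tfrac{3}{5r} > \tfrac52\sqrt\alpha + 5r^2\alpha$, which holds comfortably for $\alpha \le 10^{-7}r^{-12}$. (Note the cheap bound $\tfrac{2n}{5r}$ on bad vertices uses \eqref{eq:nonedgs} somewhat wastefully; that is fine here since we only need a constant-fraction survival.) No other ingredient is needed beyond \eqref{eq:nonedgs}, the class-size bounds, and the hypothesis on $\alpha$.
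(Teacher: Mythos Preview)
Your overall strategy---isolate in each class the vertices with few missing cross-edges and then build a $K_r$ among them---is the right one and matches the paper. There is, however, a gap in the ``equivalently''. If $v\in V_i$ is bad in your sense ($\deg(v)<n-|V_i|-5r\alpha n$), then indeed $v$ has more than $5r\alpha n$ non-neighbours in $V_i^c$; but the converse fails. A vertex that is good (high \emph{total} degree) may still have many non-neighbours outside $V_i$, provided it has many neighbours \emph{inside} $V_i$; the bound $\sum_i e(G[V_i])\le t$ from \eqref{eq:nonedgs} controls internal edges only in aggregate, not per vertex. Your greedy step uses precisely this unavailable converse (``since $x_1$ is good it has at most $5r\alpha n$ non-neighbours in each $V_j'$''), so as written the iteration is not justified.

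The fix is painless: define $v\in V_i$ to be good when $|N(v)\cap V_i^c|\ge (n-|V_i|)-5r\alpha n$, i.e.\ few non-neighbours \emph{outside} its class. This is exactly the paper's notion of ``big''. It immediately implies $\deg(v)\ge n-|V_i|-5r\alpha n$ (so the conclusion of the lemma holds for any such vertex) and simultaneously supplies the cross-class non-neighbour bound your greedy selection needs; your bookkeeping then goes through unchanged. For comparison, the paper finds the $K_r$ by a different device: after showing each big set satisfies $|B_i|\ge n/(10r)$ and that at most $2t<n^2/(100r^2)$ cross-edges are missing among the $B_i$, it applies the $r$-partite Tur\'an bound (Theorem~\ref{Turanrpartite}) to conclude that a $K_r$ meeting every $B_i$ must exist. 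Your greedy argument, once the definition is corrected, is a legitimate and slightly more elementary alternative to that step.
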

\begin{proof}
Let $V_i^c:= V(G)\setminus V_i$. We call a vertex $v_i\in V_i$ \textit{small} if $|N(v_i) \cap V_i^c|< |V_i^c|-5r\alpha n$ and \textit{big} otherwise. For $1\leq i \leq r$, denote $B_i$ the set of big vertices inside class $V_i$. There are at most 
$$ \frac{4t}{5r\alpha n} = \frac{4}{5r}n$$
small vertices in total as otherwise \eqref{eq:nonedgs} is violated. Thus, in each class there are at least $n/10r$ big vertices, i.e. $|B_i|\geq n/10r$. The number of missing edges between the sets $B_1,\dots,B_r$ is at most $2t<\frac{1}{100r^2}n^2$. Thus, using Theorem~\ref{Turanrpartite} we can find a $K_r$ with one vertex from each $B_i$. 
\end{proof}

\begin{lemma}
\label{partition2}
There exists a vertex partition $V(G)=X_1\cup \ldots \cup X_r \cup X$ such that all $X_i$s are independent sets, $|X|\leq 5r^2 \alpha n$ and $$\frac{n}{r} - 3 \sqrt{\alpha}n \leq |X_i|\leq \frac{n}{r} +3r\sqrt{\alpha}n $$
for all $1 \leq i \leq r$.
\end{lemma}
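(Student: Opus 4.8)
We start from the $K_r$ on vertices $x_1 \in V_1, \dots, x_r \in V_r$ guaranteed by Lemma~\ref{goodKr}, each with $\deg(x_i) \geq n - |V_i| - 5r\alpha n$. The idea is to use the neighborhoods of the $x_i$ to re-partition the vertex set into near-independent sets, then clean up. Since $x_1, \dots, x_r$ form a $K_r$ and $G$ is $K_{r+1}$-free, no vertex $v$ can be adjacent to all of $x_1, \dots, x_r$; hence every vertex is a non-neighbor of at least one $x_i$. Define $Y_i := V(G) \setminus N(x_i)$; then $V(G) = Y_1 \cup \dots \cup Y_r$ (with overlaps), and each $Y_i$ contains $x_i$. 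Assign each vertex $v$ to exactly one class, say $X_i'$, where $i$ is chosen so that $v \notin N(x_i)$ (breaking ties arbitrarily, but keeping $x_i \in X_i'$). The key point is that $X_i' \subseteq Y_i = V(G)\setminus N(x_i)$, so $X_i'$ is \emph{almost} independent: any edge inside $X_i'$ together with $x_i$ would not form a $K_{r+1}$ directly, but we can bound the number of such edges. Actually the cleaner route: count non-edges. We have $|Y_i| = n - \deg(x_i) \leq |V_i| + 5r\alpha n$, so $\sum_i |X_i'| \le \sum_i |Y_i| \le \sum_i |V_i| + 5r^2\alpha n = n + 5r^2 \alpha n$. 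Since the $X_i'$ partition $V(G)$, this over-count of $5r^2\alpha n$ is exactly the total amount by which the $Y_i$'s overlap, which we will absorb into an exceptional set.

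More precisely, first I would move into an exceptional set $X$ all vertices that lie in more than one $Y_i$ beyond what is needed — but a slicker accounting is: for each $i$, let $X_i'$ be the set of vertices $v$ with $v \notin N(x_i)$ but $v \in N(x_j)$ for all $j < i$; this gives a genuine partition $V(G) = X_1' \cup \dots \cup X_r'$ with $x_i \in X_i'$ and $X_i' \subseteq Y_i$. Then $\sum_i |X_i'| = n$ but also each $X_i' \subseteq Y_i$, and since $\bigcup_i Y_i = V(G)$ with $\sum_i |Y_i| \le n + 5r^2\alpha n$, the sets $Y_i$ overlap in total measure at most $5r^2\alpha n$. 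Now I want each $X_i'$ to actually be independent. An edge $uv$ inside $X_i'$ has both endpoints non-adjacent to $x_i$. To make $X_i'$ independent, remove from it (into $X$) one endpoint of each edge; but to control $|X|$ I instead bound the number of vertices that are endpoints of edges inside some $X_i'$. These are "bad" vertices; I claim their number is $O(r^2 \alpha n)$ by the following argument: the edges inside the $X_i'$'s are among the $\le t$ edges counted by $\sum_i e(G[V_i])$ only if $X_i'$ roughly matches $V_i$, which it does up to the overlap error — so one should instead directly bound $\sum_i e(G[X_i'])$. Here is where I expect the main work: relate $e(G[X_i'])$ to the edge deficiency. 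Since $e(G) \ge \ex(n,K_{r+1}) - t$ and the complete $r$-partite graph on parts $X_1', \dots, X_r'$ has at most $\ex(n,K_{r+1})$ edges, we get $\sum_i e(G[X_i']) \le e^c(\text{between classes}) + t \le 2t + (\text{correction})$. Actually, $e(G) = (\text{edges between } X_i') + \sum_i e(G[X_i']) \le \sum_{i<j}|X_i'||X_j'| + \sum_i e(G[X_i']) \le \ex(n,K_{r+1}) + \sum_i e(G[X_i'])$... that inequality goes the wrong way, so I must be careful: use instead that the number of non-edges between classes plus $\sum_i e(G[X_i'])$ is at most $2t + (\text{something})$ by comparing to $\binom n2 - \ex(n,K_{r+1})$ worth of non-edges — this is exactly the style of the computation in the previous lemma, and should yield $\sum_i e(G[X_i']) = O(t) = O(\alpha n^2)$.

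With $\sum_i e(G[X_i']) \le C\alpha n^2$ for an explicit constant $C$, I pick a minimum vertex cover (or just a maximal matching, doubled) inside each $X_i'$, whose total size is at most $2\sum_i e(G[X_i']) \le 2C\alpha n^2$; but that is quadratic, far too big. So instead I must remove vertices more cleverly: a vertex $v \in X_i'$ is "heavy" if it has more than, say, $\sqrt{\alpha}\, n$ neighbors inside $X_i'$. The number of heavy vertices is at most $2\sum_i e(G[X_i']) / (\sqrt\alpha n) \le 2C\sqrt\alpha n$, still too big for the claimed bound $5r^2\alpha n$. This suggests the real argument must use Lemma~\ref{goodKr} more strongly — namely, that a vertex $v\in X_i'$ nonadjacent to $x_i$ but adjacent to all other $x_j$, which is also adjacent to another such vertex $u \in X_i'$, creates a near-$K_{r+1}$ and so is constrained; in fact $v$ being adjacent to $u$ and to $x_1,\dots,\widehat{x_i},\dots,x_r$ means $\{u,v,x_1,\dots,\widehat{x_i},\dots,x_r\}$ spans $r$ vertices forming $K_r$ plus we need $u$ adjacent to all $x_j$, $j\ne i$ too — pairs of adjacent vertices in $X_i'$ that are each adjacent to all $x_j$ ($j \ne i$) would give $K_{r+1}$, contradiction. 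Hence within $X_i'$, among vertices adjacent to all $x_j$ ($j\neq i$), the set is independent! So the only edges inside $X_i'$ involve a vertex that misses some $x_j$, $j \ne i$; such vertices are few because each $x_j$ has at most $|V_j| + 5r\alpha n$ non-neighbors. Putting $X := \{v : v \text{ misses } x_j \text{ for two distinct } j\}$ — wait, every $v$ misses at least one $x_j$ by $K_{r+1}$-freeness and $|Y_j| \le |V_j| + 5r\alpha n$ gives $\sum_j |Y_j \setminus \{\text{the one forced miss}\}|$ bounded by $5r^2\alpha n$; these "extra-missing" vertices form $X$ with $|X| \le 5r^2\alpha n$, and after deleting them each remaining class is independent by the $K_{r+1}$-freeness argument above. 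Finally, the size bounds $|X_i| \ge n/r - 3\sqrt\alpha n$ and $|X_i| \le n/r + 3r\sqrt\alpha n$ follow from $|Y_i| = n - \deg(x_i)$, the degree bound in Lemma~\ref{goodKr}, the class-size bounds $|V_i| \in [n/r - \frac52\sqrt\alpha n, n/r + \frac52\sqrt\alpha n]$, and $|X| \le 5r^2\alpha n \le 5r^2 \cdot 10^{-7}r^{-12}\sqrt\alpha n \ll \sqrt\alpha n$. The main obstacle is getting the clean combinatorial dichotomy — vertices adjacent to all $x_j$, $j \ne i$, inside $X_i'$ are independent, versus the few vertices missing an extra $x_j$ — stated and bounded correctly so that $|X|$ comes out linear in $\alpha n$ rather than $\sqrt\alpha\, n$; once that is in place the rest is bookkeeping with the established inequalities.
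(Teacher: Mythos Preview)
Your final approach is correct and is exactly what the paper does: define $X_i := \bigcap_{j \ne i} N(x_j)$ and $X := V(G) \setminus \bigcup_i X_i$, so that each $X_i$ is automatically independent (an edge inside $X_i$ together with $x_1,\dots,\widehat{x_i},\dots,x_r$ would be a $K_{r+1}$) and $X$ is precisely the set of vertices missing at least two of the $x_j$. The paper bypasses all your detours through $Y_i$, edge counts in $X_i'$, heavy vertices, and vertex covers by going straight to this definition; the bound $|X|\le 5r^2\alpha n$ then drops out of the double count $\sum_i \deg(x_i) \ge (r-1)n - 5r^2\alpha n$ versus $\sum_i \deg(x_i) \le (r-1)n - |X|$, and the size bounds on $|X_i|$ follow from $|X_i| \ge n - \sum_{j\ne i}(|V_j|+5r\alpha n)$ and $|X_i| \le n - \sum_{j\ne i}|X_j|$.
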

\begin{proof}
By Lemma~\ref{goodKr} we can find vertices $x_1,\dots, x_r$ forming a $K_r$ and having $\deg(x_i)\geq n-|V_i|-5r\alpha n.$ Define $X_i$ to be the common neighborhood of $x_1,\dots, x_{i-1}$ $,x_{i+1},\dots,x_r$ and $X=V(G)\setminus (X_1 \cup \cdots \cup X_r)$. Since $G$ is $K_{r+1}$-free, the $X_i$s are independent sets. Now we bound the size of $X_i$ using the bounds on the $V_i$s. 
Since every $x_j$ has at most $|V_j|+5r\alpha n$ non-neighbors, we get
\begin{align} 
\label{degsum1} |X_i|\geq n - \sum_{\substack{1 \leq j \leq r \\ j\neq i}} \left( |V_j|+5r\alpha n \right) \geq |V_i| - 5r^2\alpha n \geq \frac{n}{r}-3\sqrt{\alpha}n.
\end{align}
and
\begin{align} 
\label{degsum2} \sum_{i=1}^r \deg(x_i) \geq n (r-1) - 5r^2\alpha n. 
\end{align}

A vertex $v\in V(G)$ cannot be incident to all of the vertices $x_1,\dots, x_r$, because $G$ is $K_{r+1}$-free. Further, every vertex from $X$ is not incident to at least two of the vertices $x_1,\dots, x_r$. Thus,

\begin{align}
    \label{degsum3}
    \sum_{i=1}^r \deg(x_i) \leq n (r-1) - |X|.
\end{align}

\noindent
Combining \eqref{degsum2} with \eqref{degsum3}, we conclude that
$$ |X|\leq 5r^2 \alpha n.$$
\noindent
For the upper bound on the sizes of the sets $X_i$ we get

\begin{align} 
 |X_i|\leq n - \sum_{\substack{1 \leq j \leq r \\ j\neq i}} |X_j|  \leq n- \frac{r-1}{r}n +3r \sqrt{\alpha}n =  \frac{n}{r} +3r \sqrt{\alpha}n. 
\end{align}

\end{proof}

We now bound the number of non-edges between $X_1,\dots, X_r$.
\begin{lemma}
$$ \sum_{1\leq i < j \leq r}e^c(X_i,X_j) \leq t+e(X,X^c)+|X|^2 - \left(1-\frac{1}{r}\right)n|X| + r. $$
\label{nonedges1}
\end{lemma}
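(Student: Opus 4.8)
The goal is to bound $\sum_{i<j} e^c(X_i,X_j)$, the number of non-edges inside the "good" partition $X_1\cup\cdots\cup X_r$. The natural idea is to compare the edge count of $G$ restricted to $X_1\cup\cdots\cup X_r$ with the edge count of the complete $r$-partite graph $K(|X_1|,\dots,|X_r|)$ on the same classes, and then relate this back to the global edge count $e(G)\ge \ex(n,K_{r+1})-t$ using Turán's theorem.

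First I would set $X^c=V(G)\setminus X=X_1\cup\cdots\cup X_r$ and write the total edge count as
\begin{equation*}
e(G)=e(G[X^c])+e(X,X^c)+e(G[X]).
\end{equation*}
Since each $X_i$ is independent (Lemma~\ref{partition2}), $e(G[X^c])=\sum_{i<j}e(X_i,X_j)=\sum_{i<j}|X_i||X_j|-\sum_{i<j}e^c(X_i,X_j)$. Also $\sum_{i<j}|X_i||X_j|=\binom{|X^c|}{2}-\sum_i\binom{|X_i|}{2}+\tfrac12\sum_i|X_i|$; more usefully, $\sum_{i<j}|X_i||X_j|$ is maximized when the $|X_i|$ are as equal as possible, and in any case $\sum_{i<j}|X_i||X_j|\le \ex(|X^c|,K_{r+1})\le \tfrac12(1-\tfrac1r)|X^c|^2=\tfrac12(1-\tfrac1r)(n-|X|)^2$. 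Plugging $|X^c|=n-|X|$ and expanding gives $\tfrac12(1-\tfrac1r)n^2-(1-\tfrac1r)n|X|+\tfrac12(1-\tfrac1r)|X|^2$, and dropping the (positive) last term only helps the upper bound; the Turán rounding term contributes the additive $r$.

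Now I would assemble the inequality. From $e(G)\ge \ex(n,K_{r+1})-t\ge \tfrac12(1-\tfrac1r)n^2-\tfrac r2-t$ and the decomposition above,
\begin{equation*}
\tfrac12(1-\tfrac1r)n^2-\tfrac r2-t\le \sum_{i<j}|X_i||X_j|-\sum_{i<j}e^c(X_i,X_j)+e(X,X^c)+e(G[X]).
\end{equation*}
Rearranging and using $\sum_{i<j}|X_i||X_j|\le \tfrac12(1-\tfrac1r)n^2-(1-\tfrac1r)n|X|+\tfrac12(1-\tfrac1r)|X|^2$ together with the trivial bound $e(G[X])\le\binom{|X|}{2}\le\tfrac12|X|^2$ (so $\tfrac12(1-\tfrac1r)|X|^2+e(G[X])\le |X|^2$), we obtain
\begin{equation*}
\sum_{i<j}e^c(X_i,X_j)\le t+e(X,X^c)+|X|^2-(1-\tfrac1r)n|X|+r,
\end{equation*}
absorbing the $\tfrac r2$ into the final $+r$. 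That is exactly the claimed bound.

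**Main obstacle.** The only subtlety is handling the term $\sum_{i<j}|X_i||X_j|$ cleanly. It is tempting to just write it equals something exact, but the classes $X_i$ are not perfectly balanced, so the cleanest route is to bound it from above by $\ex(n-|X|,K_{r+1})$ — which holds because $K(|X_1|,\dots,|X_r|)$ is a $K_{r+1}$-free graph on $n-|X|$ vertices — and then apply Turán's upper bound. I need to make sure the bookkeeping of the rounding constants ($\tfrac r2$ from Turán on $n$ vertices, plus possibly another rounding loss) fits inside the single additive $+r$; since $t\ge r$ was already established in the excerpt, there is plenty of slack, so this is routine. No genuinely hard step is involved; it is a matter of organizing the edge-counting identity and applying Turán's theorem twice (once on $n$ vertices for the lower bound on $e(G)$, once on $n-|X|$ vertices for the upper bound on $\sum_{i<j}|X_i||X_j|$).
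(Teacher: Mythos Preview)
Your proposal is correct and essentially identical to the paper's own proof: decompose $e(G)=e(G[X^c])+e(X,X^c)+e(G[X])$, lower-bound $e(G)$ via Tur\'an's theorem, upper-bound $\sum_{i<j}|X_i||X_j|$ by $\tfrac12(1-\tfrac1r)(n-|X|)^2$ (since $K(|X_1|,\dots,|X_r|)$ is $K_{r+1}$-free), use $e(G[X])\le |X|^2/2$, and rearrange. The only cosmetic wrinkle is your remark about ``dropping the (positive) last term'': you do not actually drop $\tfrac12(1-\tfrac1r)|X|^2$ but rather combine it with $e(G[X])\le |X|^2/2$ to get $|X|^2$, exactly as the paper does implicitly; with that read correctly, the rounding leaves only $r/2$, comfortably within the stated $+r$.
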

\begin{proof}

\begin{align}
    & \quad \frac{n^2}{2}\left(1-\frac{1}{r} \right)-\frac{r}{2}-t \leq e(G)= e(X,X^c) +e(X)+\sum_{1\leq i <j \leq r}e(X_i,X_j) \\
    &\leq e(X,X^c) + \frac{|X|^2}{2}+ \left(1-\frac{1}{r}\right) \left(\frac{(n-|X|)^2}{2}\right) - \sum_{1\leq i < j \leq r}e^c(X_i,X_j).
\end{align}
This gives the statement of the lemma. 
\end{proof}

Let $$\bar{X}=\left\{v\in X \,\middle|\, \deg_{X_1\cup \cdots \cup X_r}(v)\geq \frac{r-2}{r}n+ 3\alpha^{1/3}n    \right\} \text{\quad and \quad} \hat{X}:=X\setminus \bar{X}.$$

Let $d\in [0,1]$ such that $|\bar{X}|=d|X|$. Further, let $k\in [0,5r^2]$ such that $|X|=k \alpha n$. Now we shall give an upper bound the number of non-edges between $X_1,\dots,X_r$. 

\begin{lemma}
$$ \sum_{1\leq i < j \leq r}e^c(X_i,X_j) \leq 20r^2\alpha^{4/3}n^2    + \left(1-(1-d)\frac{1}{r}k\right)  \alpha n^2. $$
\label{nonedges2}
\end{lemma}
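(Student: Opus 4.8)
The plan is to start from Lemma~\ref{nonedges1}, which bounds $\sum_{i<j}e^c(X_i,X_j)$ in terms of $e(X,X^c)$ with $X^c=X_1\cup\cdots\cup X_r$, so the whole task reduces to controlling the number of edges from $X$ into $X^c$. Write $e(X,X^c)=\sum_{v\in X}\deg_{X^c}(v)$ and split $X=\bar X\cup\hat X$. For $v\in\hat X$ the definition of $\hat X$ immediately gives $\deg_{X^c}(v)<\frac{r-2}{r}n+3\alpha^{1/3}n$. The crux is to show, for $v\in\bar X$, the stronger estimate $\deg_{X^c}(v)\le\frac{r-1}{r}n+5\sqrt{r\alpha}\,n$, i.e. that such a $v$ is essentially non-adjacent to an entire color class. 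The trivial bound $\deg_{X^c}(v)\le n-|X|$ is not enough: a short computation shows it would produce the coefficient $\frac{2d-1}{r}$ in front of $k\alpha n^2$ instead of the required $-\frac{1-d}{r}$.

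To prove the bound on $v\in\bar X$, I would first feed the trivial estimate $e(X,X^c)\le|X|(n-|X|)$ back into Lemma~\ref{nonedges1}; together with $|X|\le5r^2\alpha n$ from Lemma~\ref{partition2} and $t=\alpha n^2\ge r$ this yields a crude preliminary bound of the shape $\sum_{i<j}e^c(X_i,X_j)\le 7r\alpha n^2$. Now fix $v\in\bar X$ and set $Y_i:=N(v)\cap X_i$, $n_i:=|Y_i|$. Since $G$ is $K_{r+1}$-free and $v$ is adjacent to every vertex of $N(v)\cap X^c$, the set $N(v)\cap X^c$ is $K_r$-free; as each $Y_i$ is independent, it is an $r$-partite graph with parts $Y_1,\dots,Y_r$, every missing edge of which is a non-edge of $G$ between two classes $X_i,X_j$. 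If some $Y_{i_0}$ is empty, then $\deg_{X^c}(v)\le n-|X|-|X_{i_0}|\le\frac{r-1}{r}n+3\sqrt\alpha\,n$ using $|X_{i_0}|\ge\frac nr-3\sqrt\alpha\,n$ from Lemma~\ref{partition2}; otherwise Theorem~\ref{Turanrpartite} gives that the product of the two smallest $n_i$ is at most the number of cross non-edges, hence at most $7r\alpha n^2$, so the smallest part $Y_{i_0}$ has $|Y_{i_0}|\le\sqrt{7r\alpha}\,n$, and then $\deg_{X^c}(v)=\sum_i n_i\le\sqrt{7r\alpha}\,n+(n-|X|-|X_{i_0}|)\le\frac{r-1}{r}n+5\sqrt{r\alpha}\,n$.

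Finally I would combine everything: $e(X,X^c)\le|\bar X|\bigl(\tfrac{r-1}{r}n+5\sqrt{r\alpha}\,n\bigr)+|\hat X|\bigl(\tfrac{r-2}{r}n+3\alpha^{1/3}n\bigr)$ with $|\bar X|=dk\alpha n$ and $|\hat X|=(1-d)k\alpha n$, substitute into Lemma~\ref{nonedges1}, and observe that the coefficient of $k\alpha n^2$ collapses:
\[
\frac{(r-1)d+(r-2)(1-d)-(r-1)}{r}=\frac{d-1}{r}=-\frac{1-d}{r},
\]
which produces the main term $\bigl(1-(1-d)\tfrac kr\bigr)\alpha n^2$ (the $1$ coming from the $+t$ in Lemma~\ref{nonedges1}). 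The remaining terms — $|X|^2$, the additive $r$, $3(1-d)k\alpha^{4/3}n^2$, and $5dk\sqrt r\,\alpha^{3/2}n^2$ — are each at most $r^2\alpha^{4/3}n^2$ under the hypotheses $\alpha\le10^{-7}r^{-12}$ and $n\ge3r^2$ (using also $t=\alpha n^2\ge r$ and the consequence $r^2\alpha^{1/3}\ge1$), so their sum stays well below $20r^2\alpha^{4/3}n^2$.

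The main obstacle is the middle step: turning $K_{r+1}$-freeness into the statement that every heavy vertex of $X$ misses almost an entire color class, via the $r$-partite Turán bound. The delicate point is the order of operations — the crude non-edge bound must be derived first from the trivial estimate on $e(X,X^c)$, and only then used to control the part sizes $n_i$ of $N(v)\cap X^c$, so that no circularity arises between bounding $\sum_{i<j}e^c(X_i,X_j)$ and bounding $\deg_{X^c}(v)$.
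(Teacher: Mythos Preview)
Your outline is correct and yields the lemma, but you are working much harder than necessary in the middle step. The paper's proof follows exactly the same skeleton --- start from Lemma~\ref{nonedges1}, split $e(X,X^c)$ over $\bar X$ and $\hat X$, use the defining inequality for $\hat X$, and collapse the coefficient of $k\alpha n^2$ to $-(1-d)/r$ --- but it handles $v\in\bar X$ in one line: simply $\deg_{X^c}(v)\le\deg(v)\le\Delta(G)\le\tfrac{r-1}{r}n+\tfrac52\sqrt{\alpha}\,n$, the maximum-degree bound already established before Lemma~\ref{goodKr}. There is no need for your two-stage bootstrapping (crude bound on $\sum e^c(X_i,X_j)$, then Theorem~\ref{Turanrpartite} on $N(v)\cap X^c$); that machinery is precisely what the paper deploys \emph{after} Lemma~\ref{nonedges2}, once the sharper non-edge bound is in hand, to control the neighborhood structure of vertices in $X$. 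So your argument is valid but anticipates a later step and uses it where a pre-existing bound suffices.

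One small slip in your cleanup: the claimed ``consequence $r^2\alpha^{1/3}\ge1$'' is false, since $\alpha\le10^{-7}r^{-12}$ forces $r^2\alpha^{1/3}\le10^{-7/3}r^{-2}<1$. To absorb the additive $r$ into $r^2\alpha^{4/3}n^2$ you should instead use the lower bound $\alpha\ge1/(2rn)$ from~\eqref{smallalpha}: then $r^2\alpha^{4/3}n^2\ge r^2(2rn)^{-4/3}n^2=2^{-4/3}r^{2/3}n^{2/3}$, which exceeds $r$ as soon as $n^2\ge16r$, and this holds because $n\ge3r^2$. With that correction, everything goes through.
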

\begin{proof}
By Lemma~\ref{nonedges1},
 \begin{align}
     & \quad \sum_{1\leq i < j \leq r}e^c(X_i,X_j) \leq t+e(X,X^c)+|X|^2 - \left(1-\frac{1}{r}\right)n|X|+r \\
     &\leq t+d|X| \Delta(G)    + (1-d)|X|\left(\frac{r-2}{r}n+ 3\alpha^{1/3}n\right)     +|X|^2 
     - \left(1-\frac{1}{r}\right)n|X|+r \\ 
     &\leq t+d|X| \left(n \frac{r-1}{r} + \frac{5}{2}\sqrt{\alpha}n\right)    + (1-d)|X|\left(\frac{r-2}{r}n+ 3\alpha^{1/3}n\right)  \\  
     &~ +|X|^2 - \left(1-\frac{1}{r}\right)n|X|+r \\
       &\leq \frac{5}{2}d|X|  \sqrt{\alpha}n    +3 (1-d)|X|\alpha^{1/3}n     +|X|^2 + t + n|X| \frac{d-1}{r}  +r\\       
     &\leq \frac{5}{2}k \alpha^{3/2}n^2    + 3k \alpha^{4/3}n^2     +|X|^2 + \left(1-(1-d)\frac{1}{r}k\right)  \alpha n^2 +r \\  
     &\leq \frac{25}{2}r^2\alpha^{3/2}n^2+ 15r^2\alpha^{4/3}n^2+25r^4 \alpha^2 n^2+ \left(1-(1-d)\frac{1}{r}k\right)  \alpha n^2 +r\\
     &\leq 20r^2\alpha^{4/3}n^2   + \left(1-(1-d)\frac{1}{r}k\right)  \alpha n^2.    
 \end{align} 
 \end{proof}

\noindent
Let
\begin{align}
    C(\alpha):=20r^2\alpha^{4/3}  + \left(1-(1-d)\frac{1}{r}k\right)\alpha .
\end{align} 
For every vertex $u\in X$ there is no $K_r$ in $N_{X_1}(u) \cup \cdots \cup N_{X_r}(u)$. Thus, by applying Theorem~\ref{Turanrpartite} and Lemma~\ref{nonedges2}, we get


\begin{align}
   \min_{i\neq j}{|N_{X_i}(u)| |N_{X_j}(u)|} \leq  \sum_{1\leq i < j \leq r}e^c(X_i,X_j)\leq C(\alpha)n^2.   \label{sizesineq} 
\end{align}
Bound \eqref{sizesineq} implies in particular that every vertex $u\in X$ has degree at most $\sqrt{C(\alpha)}n$ to one of the sets $X_1,\dots,X_r$, i.e.
\begin{align}
\min_i{|N_{X_i}(u)|} \leq \sqrt{C(\alpha)}n. \label{ineqpartition1} 
\end{align}
Therefore, we can partition $\hat{X}=A_1 \cup \ldots \cup A_r$ such that every vertex $u\in A_i$ has at most $\sqrt{C(\alpha)}n$ neighbors in $X_i$. \\
By the following calculation, for every vertex $u\in \bar{X}$ the second smallest neighborhood to the $X_i$'s has size at least $\alpha^{1/3}n$.
\begin{align}
\label{twosmallestsizes}
    \min_{i\neq j}{|N_{X_i}(u)| + |N_{X_j}(u)|} \geq \frac{r-2}{r}n+3 \alpha^{1/3}n -  (r-2)\left( \frac{n}{r} +3r\sqrt{\alpha}n\right) \geq 2\alpha^{1/3}n,    
\end{align}
where we used the definition of $\bar{X}$ and Lemma~\ref{partition2}. Combining the lower bound on the second smallest neighborhood with \eqref{sizesineq} we can conclude that for every $u\in \bar{X}$
\begin{align}
 \min_i{|N_{X_i}(u)|} \leq \frac{C(\alpha)}{\alpha^{1/3}}n. \label{ineqpartition2} 
\end{align}
Hence, we can partition $\bar{X}=B_1 \cup \ldots \cup B_r$ such that every vertex $u\in B_i$ has at most $C(\alpha)\alpha^{-1/3}n$ neighbors in $X_i$. Consider the partition $A_1 \cup B_1 \cup X_1$,$A_2 \cup B_2 \cup X_2, \dots, A_r \cup B_r \cup X_r$. By removing all edges inside the classes we end up with an $r$-partite graph. We have to remove at most 
  \begin{align}
     &\quad  e(X) +  d|X| \frac{C(\alpha)}{\alpha^{1/3}}n +(1-d)|X|\sqrt{C(\alpha)}n  
      \leq  6r^2\alpha^{5/3}n^2  +(1-d)k\sqrt{C(\alpha)}\alpha n^2 \\
     &\leq  6r^2\alpha^{5/3}n^2  +(1-d)k \left( \sqrt{20r^2\alpha^{4/3}}    + \sqrt{\left(1-(1-d)\frac{1}{r}k \right)  \alpha} \right) \alpha n^2 \\ 
     &\leq \left(\frac{2r}{3\sqrt{3}}+30r^3\alpha^{1/6} \right)\alpha^{3/2}n^2 
 \end{align}
  edges. We have used \eqref{ineqpartition1}, \eqref{ineqpartition2} and the fact that 
 
 $$(1-d)k \sqrt{1-(1-d)\frac{k}{r}}\leq \frac{2r}{3\sqrt{3}},$$  which can be seen by setting $z=(1-d)k$ and 
 finding the maximum of $f(z):=z \sqrt{1-\frac{z}{r}}$ which is obtained at $z=2r/3$.

\section{Sharpness Example}
\label{sectionK_rexample}
In this section we will prove Theorem~\ref{K_rexample}, i.e.~that the leading term from Theorem~\ref{trianglefreemain} is best possible. 

\begin{proof}[Proof of Theorem~\ref{K_rexample}]
Let $G$ be the graph with vertex set $V(G)=A \cup X \cup B \cup C \cup D \cup X_1 \cdots \cup X_{r-2}$, where all classes $A,X,B,C,D,X_1, \dots, X_{r-2}$ form independent sets; $A,X,B,C,D$ form a complete blow-up of a $C_5$, where the classes are named in cyclic order; and for each  $ 1\leq i\leq r-2$, every vertex from $X_i$ is incident to all vertices from $V(G)\setminus X_i$.

\begin{figure}[ht]
\begin{tikzpicture}[scale=0.3]
\node at (0,0) (f) {};
\node[draw] at (21, 10)   (a) {};
\node[draw] at (27, 10)   (b) {};
\node[draw] at (24, 20)   (c) {};
\node[draw] at (19, 15)   (d) {};
\node[draw] at (29, 15)   (e) {};

\node[draw] at (15,2) (x1) {};
\node[draw] at (20,2) (x2) {};
\node[draw] at (33,2) (x3) {};

\node[draw, circle, fill=black, scale=0.5]at (24.5,1.5) (v1) {};
\node[draw, circle, fill=black, scale=0.5] at (26,1.5) (v2) {};
\node[draw, circle, fill=black, scale=0.5] at (27.5,1.5) (v3) {};

\draw[] (x1) -- (x2); 
\draw [black]   (x1) to[out=-20,in=200] (x3);
\draw [black]   (x2) to[out=10,in=170] (x3);
\draw[] (x1) -- (a); \draw[] (x1) -- (b);  \draw[] (x1) -- (c);  \draw[] (x1) -- (d); \draw[] (x1) -- (e); 
\draw[] (x2) -- (a); \draw[] (x2) -- (b);  \draw[] (x2) -- (c);  \draw[] (x2) -- (d); \draw[] (x2) -- (e); 
\draw[] (x3) -- (a); \draw[] (x3) -- (b);  \draw[] (x3) -- (c);  \draw[] (x3) -- (d); \draw[] (x3) -- (e);

\draw [fill=white] (x1) circle[radius= 4.5em]; 
\draw [fill=white] (x2) circle[radius= 4.5em]; 
\draw [fill=white] (x3) circle[radius= 4.5em]; 

\draw[] (a) -- (b); 
\draw[] (a) -- (d); 
\draw[] (d) -- (c); 
\draw[] (c) -- (e); 
\draw[] (e) -- (b); 
\draw [fill=white] (a) circle[radius= 4.5em]; 
\draw [fill=white] (b) circle[radius= 4.5em]; 
\draw [fill=white] (c) circle[radius= 2.5em]; 
\draw [fill=white] (d) circle[radius= 3.5em]; 
\draw [fill=white] (e) circle[radius= 3.5em]; 

\node at (21, 10)   (a) {$D$};
\node at (27, 10)   (b) {$C$};
\node at (24, 20)   (c) {$X$};
\node at (19, 15)   (d) {$A$};
\node at (29, 15)   (e) {$B$};

\node at (15,2) (x1) {$X_1$};
\node at (20,2) (x2) {$X_2$};
\node at (33,2) (x3) {$X_{r-2}$};

\end{tikzpicture}
\label{fig1}
\caption{Graph $G$}
\end{figure}

The sizes of the classes are

$$ |X|=\frac{2r}{3}\alpha n, \quad  |A|=|B|=\sqrt{\frac{\alpha}{3}}n, \quad |C|=|D|=\frac{1-\frac{2r}{3}\alpha  }{r}n- \sqrt{\frac{\alpha}{3}}n, \quad |X_i|= \frac{1-\frac{2r}{3}\alpha  }{r}n.$$

The smallest class is $X$ and the second smallest are $A$ and $B$. By deleting all edges between $X$ and $A$ ($|X||A|= \frac{2r}{3\sqrt{3}}  \alpha^{3/2}n^2$) we get an $r$-partite graph. Since the classes $A$ and $X$ are the two smallest class sizes, one cannot do better as observed in \cite[Theorem 7]{Howmany}. Hence $$D_r(G)\geq \frac{2r}{3\sqrt{3}}  \alpha^{3/2}n^2.$$
Let us now count the number of edges of $G$. The number of edges incident to $X$ is 
\begin{align} e(X,X^c)&= \left(\frac{2r}{3}\alpha \right)\left(2\sqrt{\frac{\alpha}{3}} \right)n^2+ \left(\frac{2r}{3}\alpha \right)\left(\frac{1-\frac{2r}{3}\alpha  }{r} (r-2)\right)n^2 \\
&= \left(\frac{2}{3}(r-2)\alpha + \frac{4r}{3\sqrt{3}} \alpha^{3/2} - \frac{4r(r-2)}{9}\alpha^2    \right)n^2.   
\end{align}
Using that $|A|+|C|=|B|+|D|=|X_1|$, we have that the number of edges inside $A \cup B \cup C \cup D \cup X_1 \cup \dots \cup X_{r-2}$ is 
\begin{align} e(X^c)&= |X_1|^2 \binom{r}{2}- |A||B|= \left( \frac{1-\frac{2r}{3}\alpha  }{r}n \right)^2 \binom{r}{2} - \frac{1}{3}\alpha n^2  \\
&= \frac{1}{r^2}\binom{r}{2}n^2 -\frac{4r}{3} \frac{1}{r^2}\alpha \binom{r}{2}n^2 + \frac{4}{9}\alpha^2\binom{r}{2}n^2- \frac{1}{3}\alpha n^2  \\
&= \left(1-\frac{1}{r} \right)\frac{n^2}{2} - \frac{2}{3} (r-1) \alpha n^2 - \frac{1}{3}\alpha n^2 + \frac{4}{9}\alpha^2\binom{r}{2}n^2.
\end{align}
Thus, the number of edges of $G$ is
\begin{align}
    e(G) &= e(X^c)+e(X,X^c) = \left(1-\frac{1}{r} \right)\frac{n^2}{2}   -   \alpha n^2 + \frac{4r}{3\sqrt{3}} \alpha^{3/2}n^2 - \frac{2r(r-3)}{9} \alpha^2 n^2 \\
    &\geq \ex(n,K_{r+1})  -\alpha n^2 + \frac{4r}{3\sqrt{3}} \alpha^{3/2}n^2  -\frac{2r(r-3)}{9}\alpha^2 n^2,
        \end{align}
where we applied Tur\'an's theorem in the last step.          
\end{proof}

\bibliographystyle{abbrv}
\bibliography{rpartite}
\end{document}